\newtheorem{theorem}{Theorem}[section]
\newtheorem{lema}[theorem]{Lemma}
\newtheorem*{thm}{Theorem}
\newtheorem{cor}[theorem]{Corollary}
\newtheorem{rmk}[theorem]{Remark}
\theoremstyle{remark}
\newcommand{\QQ}{\mathbb{Q}}
\newcommand{\ZZ}{\mathbb{Z}}
\newcommand{\CC}{\mathbb{C}}
\newcommand{\EFF}{\mathrm{Eff}}
\newcommand{\NEF}{\mathrm{Nef}}
\newcommand{\PP}{\mathbb{P}}
\newcommand{\OO}{\mathcal O}
\numberwithin{equation}{section}
\begin{document}

\title{Extremal higher codimension cycles of the space of complete conics}

\author{C\'esar Lozano Huerta}
\address{Instituto de Matem\'aticas, Universidad Nacional Aut\'onoma de M\'exico. Oaxaca de Ju\'arez, Oax. M\'exico.}

\email{lozano@im.unam.mx}
\thanks{The author is currently a CONACYT-Research Fellow at UNAM, ID: $71842$.}



\keywords{Algebraic geometry, complete conics, higher codimension cycles}

\begin{abstract} Let $M$ denote the space of complete conics. We compute the cone of effective and numerically effective $k$-cycles of $M$, $\mathrm{Eff}_k(M)$ and $\mathrm{Nef}_k(M)$, respectively. In addition, we compute the Bia\l{}ynicki-Birula cell-decomposition of $M$ with respect to a $\mathbb{C}^*$-action and compare the cone generated by the closure of these cells to the cones $\mathrm{Eff}_k(M)$ and $\mathrm{Nef}_k(M)$.
\end{abstract}

\maketitle

\section*{Introduction}

\medskip\noindent
The study of how a subvariety sits and moves inside an ambient algebraic variety has driven research in algebraic geometry for a long time. This study finds a robust and well-developed theory when dealing with subvarieties of codimension $1$, which from the theoretical viewpoint is well-understood. For example, studying points on an algebraic curve leads to theorems such as the Riemann-Roch Theorem and the Abel-Jacobi Theorem. A sensible next step in understanding the extrinsic geometry of subvarieties is to study subvarieties of codimension $2$ and higher. This situation is not well-understood and is the situation we will study. We work over the field of complex numbers throughout.

\medskip\noindent
The set of all subvarieties contained in an algebraic variety is unmanageable. Thus, one seeks to impose an equivalence relation on them, or rather on the formal linear combinations of them, so we study the set of equivalence classes which may be more manageable. These formal sums are called cycles, and Severi \cite{SEV} in a collection of papers set up and investigated the concept of rational equivalence on them. Although this is a generalization inspired by the rational equivalence of divisors on curves, it exhibits distinct behavior. For example, Mumford \cite{MUMII} showed that the group of $0$-cycles, up to rational equivalence, on a surface does not necessarily have the properties of a finite-dimensional space. We will study the space of complete conics $M$ and the spaces of its rational-equivalent cycles $A_k(M)\otimes \QQ$; these are finite-dimensional vector spaces.

\medskip\noindent
The space of complete conics is a compactification of the family of smooth conics on the plane $\PP^2$. It was introduced by Chasles \cite{CHAS} in 1864 in order to solve questions in enumerative geometry and is defined as follows: the space of complete conics is the closure of the set of pairs $M=\overline{\{(C,C^*)\}}\subset \PP^5\times \PP^{5*}$, where $C$ is a smooth conic and $C^*$ its dual. This space is a smooth variety of dimension $5$, and has been used to study enumerative questions such as the number of conics tangent to five fixed conics in general position; which is $3264$ and has a long history in algebraic geometry \cite{CONICS}. The purpose of this note is to show that the enumerative geometry of conics that $M$ allows one to study dictates the structure of certain cones contained in the space of cycles $A_k(M)$: the cones of effective cycles, $\EFF_k(M)$.

\medskip\noindent
We are primarily interested in understanding subvarieties of $M$ of dimension $k$, so we will focus on the cone generated by non-negative linear combinations with rational coefficients of subvarieties of dimension $k$ in the space of $k$-cycles $A_k(M)\otimes \QQ$. This is the cone of effective $k$-cycles, $\EFF_k(M)$. In general, these cones are very difficult to compute and only few examples are known; many of them very recently \cite{DEL, CC}. We will explicitly compute these cones for the space $M$.

\medskip\noindent
These cones are well-known in dimension $1$ and $4$. Indeed, from the definition of $M$, there are $4$ divisors we can investigate: $H_i=\pi^*_i(h)$ and the strict transform $E_i=\pi^{-1}_i(\Delta)$, where $\pi_i$ ($i=1,2$) denotes the projection map to the first or second factor. Here, $h$ denotes the hyperplane section, and $\Delta$ denotes the locus of singular conics $i.e.$, a discriminant variety. The cone of effective divisors is generated by the classes $\EFF_4(M)=\langle E_1,E_2\rangle$, and the cone of numerically effective divisors, the nef cone, is generated by the classes $\overline{\EFF}_1(M)^{\vee}= \NEF^1(M)=\langle H_1,H_2\rangle$. The divisor $E_2$ parametrizes reducible conics, and $E_1$ double lines with two marked-points. A generic point on $H_1$ parameterizes a smooth conic which passes through a fixed point, and a generic point on $H_2$ parametrizes a smooth conic tangent to a fixed line. 

\medskip\noindent
The main results of this note, Theorem \ref{THM1} and Theorem \ref{THM22}, extend the description of the previous paragraph to dimensions $2$ and $3$, $i.e.$, we describe the cones $\EFF_k(M)$ for $k=2,3$. In addition, one can read off the geometric interpretation of the generators of these cones in terms of families of conics. To simplify the language, $H_iE_i$ denotes the intersection of the subvarieties $H_i$ and $E_i$.

\medskip
\begin{thm}\label{THM2} The vector spaces $A_k(M)\otimes \QQ$ for $k=2,3$, both have dimension $3$. The respective cones of effective cycles are generated by the following classes \begin{equation}
\begin{aligned}
\mathrm{Eff}_{2}(M)&=\langle H_1^2E_1,\ \! H_2^2E_2,\  \!H_1E_1E_2,\ \!H_2E_1E_2\rangle,
\nonumber\\
\mathrm{Eff}_3(M)&=\langle H_1E_1, H_2E_2 , E_1E_2 \rangle. 
\end{aligned}
\end{equation}
\end{thm}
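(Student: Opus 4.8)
The plan is to study $A_2(M)\otimes\QQ$ and $A_3(M)\otimes\QQ$ through the two birational morphisms $\pi_1\colon M\to\PP^5$ and $\pi_2\colon M\to\PP^{5*}$. These realize $M$ as the blow-up of $\PP^5$ (resp.\ $\PP^{5*}$) along the Veronese surface $V_1\cong\PP^2$ of double lines (resp.\ the dual Veronese $V_2$), with exceptional divisor $E_1$ (resp.\ $E_2$). I would first record the structural input: the blow-up formula for Chow groups gives $\dim A_k(M)\otimes\QQ=3$ for $k=2,3$, where one generator comes from $\PP^5$ and two come from the $\PP^2$-bundle $E_i\to V_i$; the divisor relations $E_1=2H_1-H_2$ and $E_2=2H_2-H_1$ hold in $\PIC(M)\otimes\QQ$, reflecting that the rational adjugate map from $\PP^5$ to $\PP^{5*}$ has degree $2$ with base locus $V_1$; and the restriction formulas $H_1|_{E_1}=2\rho$ and $E_1|_{E_1}=-\zeta$, where $\rho$ is the pullback of $\OO_{V_1}(1)$ and $\zeta$ the relative hyperplane class on $E_1$ (and symmetrically for $E_2$). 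With these I would fix monomial bases and compute the intersection pairing $A_2(M)\times A_3(M)\to A_0(M)=\QQ$ explicitly, since every number reduces to an intersection on $\PP^5$, on the incidence threefold, or on the bundles $E_i$.

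Next I would exhibit effective representatives for each listed class, giving the inclusion $\langle\,\cdot\,\rangle\subseteq\EFF_k(M)$. The intersection $E_1\cap E_2$ is the incidence threefold $F=\{(p,\ell):p\in\ell\}\subset\PP^2\times\PP^{2*}$, so $E_1E_2=[F]$ is effective; $H_iE_i$ is represented by imposing one point or tangency condition on $E_i$; the fiber of $E_i\to V_i$ is a $\PP^2$ representing $H_i^2E_i$; and the two classes $H_1E_1E_2,H_2E_1E_2$ are represented by the two surfaces cut on $F$ by pulling back a line from each factor $\PP^2$. Each of these is an irreducible invariant subvariety, which will matter for the next step.

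For the reverse inclusion $\EFF_k(M)\subseteq\langle\,\cdot\,\rangle$ I would use the $\CC^*$-action and its Bia\l{}ynicki--Birula decomposition. The closures of the plus-cells give an additive basis of $A_*(M)$, and any effective $k$-cycle admits a flat limit under the torus to an invariant effective cycle of the same class; such a limit is a nonnegative combination of cell closures. Thus $\EFF_k(M)$ is contained in the cone spanned by the $k$-dimensional cell closures, and it remains to express each such closure as a nonnegative combination of the listed generators using the pairing computed in the first step. As an independent check on the bounding hyperplanes, I would use that the monomials $H_1^aH_2^b$ are nef classes (the $H_i$ are globally generated, so general complete intersections of them meet any subvariety nonnegatively), giving explicit linear functionals in the dual cone $\EFF_k(M)^{\vee}$.

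Finally, extremality: for each generator I would produce a supporting hyperplane, namely a nonnegative combination of nef monomials $H_1^aH_2^b$ that pairs to zero with that generator and positively with the others; for instance $H_1^3\cdot(H_1E_1)=0$ because $H_1|_{E_1}$ is pulled back from the surface $V_1$, which helps isolate the ray of $\EFF_3$ spanned by $H_1E_1$. The main obstacle I expect is the cone $\EFF_2(M)$: it has four proposed generators in a three-dimensional space, hence a single linear relation among them, and the statement asserts that the cone is the non-simplicial cone on a quadrilateral in which all four rays are extremal. Proving this requires pinning down the precise sign pattern (two positive, two negative) of that relation, which in turn rests on the exact intersection-number bookkeeping on the $\PP^2$-bundles $E_i$ and on the incidence threefold $F$; this is where the calculation is most delicate, since an error in a single Chern-class contribution would collapse a genuine ray into the interior.
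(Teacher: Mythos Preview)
Your central step for the reverse inclusion is flawed. You assert that a $\CC^*$-invariant effective cycle is automatically a nonnegative combination of BB-cell closures, hence $\EFF_k(M)\subseteq\langle\text{cell closures}\rangle$. This is false in general: an irreducible $\CC^*$-invariant subvariety need not be a cell closure, and the containment actually goes the other way (cell closures are effective, so $\langle\text{cell closures}\rangle\subseteq\EFF_k(M)$). In fact the paper computes that there are exactly three BB-cells of dimension $2$, whereas $\EFF_2(M)$ has four extremal rays; since the three cell classes span only a simplicial subcone of the quadrilateral cone $\EFF_2(M)$, the inclusion you want is strictly violated. So the torus-degeneration argument cannot close the proof.

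Your backup plan, pairing against nef monomials $H_1^aH_2^b$, is also insufficient. In $A_3(M)\otimes\QQ$ the classes $H_1^2,H_1H_2,H_2^2$ are linearly independent, and the facet of $\EFF_2(M)$ spanned by $H_1E_1E_2$ and $H_2E_1E_2$ is cut out by $\beta=H_1^2-H_1H_2+H_2^2$, which has a genuinely negative coefficient and hence lies outside the cone of monomials; likewise the facet of $\EFF_3(M)$ spanned by $H_1E_1$ and $H_2E_2$ is cut out by $\tau=(H_1+H_2)E_1E_2$, not by any $H_1^aH_2^b$. The paper's proof computes these dual cones explicitly and then proves, by a case analysis along the $SL_3$-orbit stratification $M=U\cup E_1^\circ\cup E_2^\circ\cup E_1E_2$, that $\tau$ and $\beta$ are nef: for an irreducible $Z$ one separates the cases according to $\dim(Z\cap E_i)$ and $\dim(Z\cap E_1E_2)$, reducing the nontransversal cases to the (independently computed) effective cones of $E_1\cong\PP^{2[2]}$ and of the incidence flag $E_1E_2$. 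That reduction to the boundary strata is the missing ingredient in your plan.
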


\begin{center}
\begin{figure}[htb]
\resizebox{1\textwidth}{!}{\includegraphics{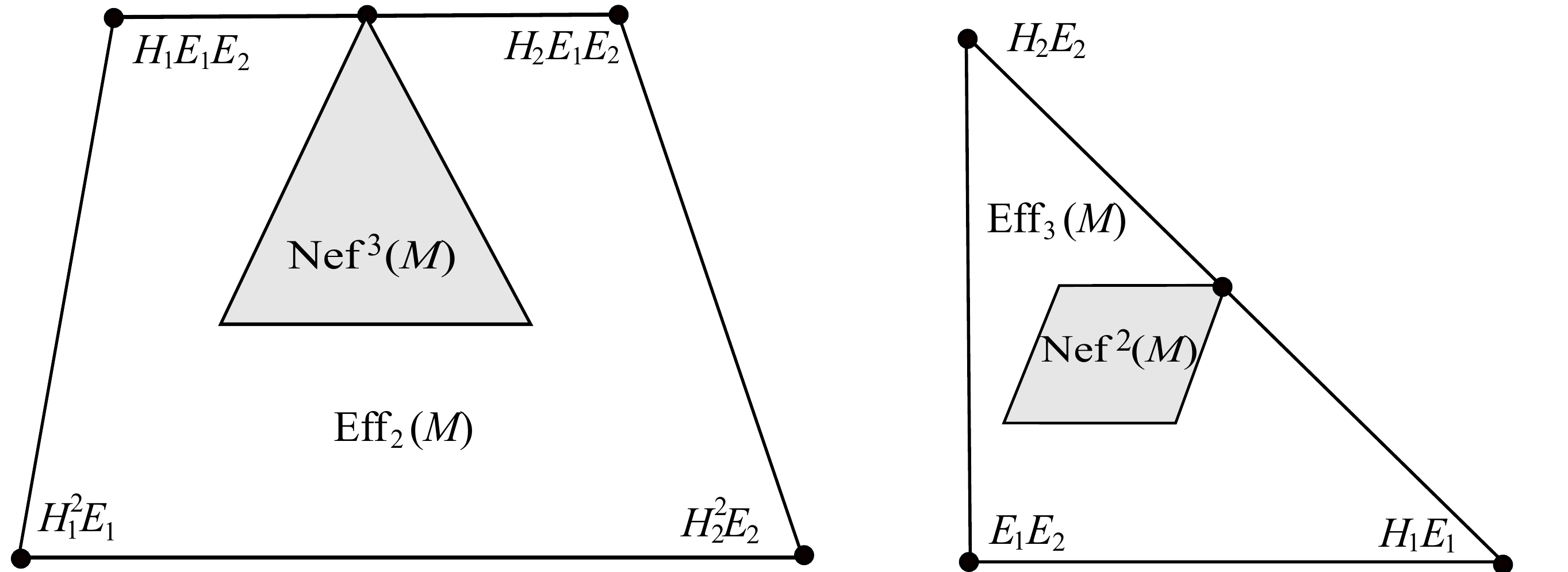}}
\caption{Cross-section of the cones $\EFF_k(M)$, and their duals $\NEF^k(M)$.}
\end{figure}
\end{center}

\medskip\noindent 
An important part of the proof is to reduce the computation of the effective cone $\EFF_k(M)$ to the computation of the effective cone of two subvarieties: $\EFF_k(E_1)$ and $\EFF_k(E_1E_2)$. We are able to do this because there is an action of $SL_3\CC$ on $M$, and $E_1$ as well as $E_1E_2$ are the closure of $SL_3$-orbits. Our technique may be applied in the more general context in which the orbits of a group action behave well; for example ``wonderful compactifications'' \cite{DECON}. Since the action of $SL_3\CC$ is ultimately the key ingredient in order to compute the effective cones, one may ask whether the action of a smaller group would yield the same results. For example:

\medskip\noindent 
The space $M$ inherits from $\PP^2$ an action of the torus $\CC^*$ which has a finite number of fixed points. Thus, it induces a Bia\l{}ynicki-Birula decomposition of $M$ into affine cells \cite{BB}. We call these cells the BB-cells of $M$. By considering the closure of the BB-cells of dimension $k$, we get effective $k$-cycles on $M$ and it is natural to ask, do they generate the cone of effective cycles of dimension $k$, $\EFF_k(M)$? 
In general, we answer in the negative by computing the number of BB-cells of dimension $2$ and the cone $\EFF_2(M)$. However, the BB-cells of $M$ of dimension $4$ do generate the cone of effective divisors $\EFF_4(M)$. In general, the precise relation between the cone generated by the BB-cells of a $\CC^*$-variety $X$ and the cones of effective cycles $\EFF_k(X)$ remains mysterious. 

\medskip\noindent  

\medskip
\section{Preliminaries} 

\medskip\noindent 
In this section, we recall definitions and properties of $M$. We
refer the reader to \cite{FULTON} for a more general treatment. A \textit{cycle} on $M$ of dimension $k$ is a formal sum of $k$-dimensional subvarieties of $M$ with rational coefficients. Such a cycle is called \textit{effective} if all the coefficients of this formal sum are nonnegative. Two cycles $X$ and $Y$ of dimension $k$ are numerically equivalent if $X\cdot\alpha=Y\cdot\alpha$ for all irreducible subvarieties of codimension $k$. Here, $\cdot$ denotes the intersection product.
Let $N_k(M)$ (respectively, $N^k(M)$) denote the finite dimensional $\QQ$-vector space of cycles of dimension $k$ (respectively, codimension $k$) up to numerical equivalence. The cone in $N_k(M)$ generated by the classes of effective cycles of dimension $k$ is called \textit{effective cone} and is denoted by $\EFF_k(M)$. 

\medskip\noindent 
A cycle $Z$ of dimension $k$ is numerically effective, or simply \textit{nef}, if $Z\cdot \alpha\ge 0$ for all irreducible subvarieties $\alpha\subset M$ of codimension $k$. One defines the \textit{nef cone} of cycle classes of dimension $k$, $\NEF_k(M)$, as the cone generated by the classes in $N_k(M)$ of nef cycles. Similarly, one defines $\NEF^k(M)$, the cone of nef cycle classes of codimension $k$. Note that in general the dual of the cone  $\NEF^k$ is the closure of the effective cone $\overline{\EFF}_k$. These convex cones are effectively described  by extremal rays. Recall that a ray $R$ is said to be an \textit{extremal ray} if for any $D\in R$ with $D=D_1+D_2$, where $D_1,D_2$ are in the cone, then $D_1, D_2\in R$. If an extremal ray is spanned by the class of an effective cycle $Z$, we say that $Z$ is an \textit{extremal effective cycle}. In general, $\EFF_k$ is not equal to its closure. In our case, we have that $\overline{\EFF}_k(M)=\EFF_k(M)$ as a consequence of the results of the present paper.

\medskip\noindent
The space of complete conics $M$ admits the following two alternative descriptions. First, $M\cong \overline{M}_{0,0}(\PP^{2},2)$ is isomorphic to a Kontsevich moduli space of stable maps. This description of $M$ often facilitates to perform intersection theory, but we will make little use of this isomorphism. On the other hand, it is important for us that $M$ is isomorphic to the blowup of $\PP^5$ along the Veronese surface $S\subset \PP^5$, \cite{JOE}. The exceptional divisor of this blowup coincides with $E_1$. In fact, from the definition of $M$ as the closure of the set of pairs $\{(C,C^*)\}$, the projection into the first factor is the blowup map. Furthermore, this description of $M$ allows one to compute $A^k(M)$ for $k=1,2,3$, and see that rational equivalence and numerical equivalence of cycles coincide on $M$. In fact, rational and numerical equivalence coincide for a large class of varieties called spherical varieties \cite{PERRIN}; $M$ is a variety of this type. Therefore, we may use freely $N_k(M)$ or $A_k(M)\otimes \QQ$.  

\medskip\noindent
The space $M$ inherits an action of $SL_3$ from $\PP^2$. This action induces the following stratification into $SL_3$-orbits \cite{DECON}  $$M=U\cup (E_1^{\circ}\cup E_2^{\circ})\cup E_1E_2,$$ where $U$ is an open dense subset, and $E_1^{\circ}$ and  $E_2^{\circ}$ are $SL_3$-orbits of codimension 1. The unique closed orbit is $E_1E_2$.

\medskip\noindent
The characteristic numbers of $M$ such as $H_2^5=1$ (the number of conics tangent to five fixed lines in general position) have a long history in algebraic geometry. We make use of such numbers in order to understand the geometry of the cones $\EFF_k(M)$ and $\NEF_k(M)$. We refer the reader to \cite{GH} for a proof of the following

\medskip\noindent
\begin{lema}\label{CHAR}
Let $M$ be the space of complete conics. The following intersection numbers hold, $$H_1^5=H_2^5=1,\quad H_1^4H_2=H_1H_2^4=2,\quad H_1^2H_2^3=H_1^3H_2^2=4. $$
\end{lema}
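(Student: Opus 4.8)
The plan is to exploit the description of $M$ as the blowup $\pi_1\colon M\to\PP^5$ of the projective space of conics along the Veronese surface $S$, already recorded in the excerpt, and to reduce everything to a single linear relation in $\PIC(M)$ together with the duality symmetry $C\mapsto C^*$. First I would establish the relation
\[
H_2 = 2H_1 - E_1
\]
in $\PIC(M)$. This comes from the fact that the second projection $\pi_2\colon M\to\PP^{5*}$ resolves the ``adjugate'' rational map $\PP^5\dashrightarrow\PP^{5*}$ sending a symmetric matrix $A$ to its matrix of cofactors $\mathrm{adj}(A)$: the entries are quadrics, so the map is given by a subsystem of $\lvert\OO_{\PP^5}(2)\rvert$, and its base locus is exactly the locus where $A$ has rank $\le 1$, i.e. the Veronese $S$. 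Since the $2\times 2$ minors generate the (radical) ideal of the smooth surface $S$, the base scheme is $S$ with its reduced structure, so after blowing up the system becomes base-point-free and $H_2=\pi_2^*\OO(1)=2H_1-E_1$.

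Granting this, the computation collapses. Because $H_1=\pi_1^*h$ is pulled back along a birational morphism, the projection formula gives $H_1^5 = h^5 = 1$. Next, the two vanishings $H_1^4E_1 = 0$ and $H_1^3E_1^2 = 0$ hold purely for dimension reasons: restricting to the exceptional divisor $j\colon E_1\hookrightarrow M$ and writing $g\colon E_1\to S$ for the bundle projection, one has $j^*H_1 = g^*(h|_S)$, and any power $(h|_S)^m$ with $m\ge 3$ vanishes since $\dim S = 2$. Expanding $H_1^aH_2^b = H_1^a(2H_1-E_1)^b$ and discarding the terms containing $H_1^4E_1$ or $H_1^3E_1^2$ then yields
\begin{align*}
H_1^5 &= 1,\\
H_1^4H_2 &= 2H_1^5 = 2,\\
H_1^3H_2^2 &= 4H_1^5 = 4.
\end{align*}
Finally, the self-duality of $M$ under $C\mapsto C^*$ interchanges the two projections and hence swaps $H_1\leftrightarrow H_2$; equivalently, a conic through $a$ points tangent to $b$ lines is dual to one through $b$ points tangent to $a$ lines. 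This forces $H_1^aH_2^b = H_1^bH_2^a$, so the remaining numbers $H_1^2H_2^3 = 4$, $H_1H_2^4 = 2$, and $H_2^5 = 1$ follow at once from the three computed above.

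The step I expect to require the most care is the justification of $H_2 = 2H_1-E_1$, specifically that the adjugate system has multiplicity exactly one along $S$; this is where one must verify that the scheme cut out by the $2\times 2$ minors is the reduced Veronese (so that blowing up the reduced center indeed resolves the map). If one preferred to avoid the duality shortcut, the alternative is to compute all of $H_1^{5-b}E_1^b$ directly from the Segre classes of $N_{S/\PP^5}$: from $c(N)=1+9H+30H^2$ one gets $s(N)=1-9H+51H^2$, and the standard blowup intersection formula then gives $H_1^2E_1^3=4$, $H_1E_1^4=18$, $E_1^5=51$, which reproduce the same six numbers after expanding $(2H_1-E_1)^b$. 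The duality argument is cleaner, but it is worth recording that it is the only place a genuinely global input about $M$, beyond the blowup structure and the relation above, enters the proof.
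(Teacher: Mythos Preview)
Your argument is correct. The paper itself does not prove this lemma; it simply refers to Griffiths--Harris \cite{GH}, where the numbers are obtained by the classical enumerative interpretation (five conditions on conics: passing through a point versus tangency to a line), together with degeneration to count, e.g., the two conics through four general points tangent to a general line. Your route is genuinely different and more algebraic: you first establish the relation $H_2=2H_1-E_1$ directly from the blowup description of $M$ via the adjugate map, then read off $H_1^5$, $H_1^4H_2$, $H_1^3H_2^2$ from the trivial vanishings $H_1^4E_1=H_1^3E_1^2=0$ (which follow since $H_1|_{E_1}$ is pulled back from the $2$-dimensional Veronese), and finally invoke the Gauss-map symmetry $H_1\leftrightarrow H_2$ for the remaining three numbers. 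Note that this reverses the logical order adopted in the paper, which \emph{derives} $H_2=2H_1-E_1$ as a consequence of the characteristic numbers rather than as an input; your order is arguably cleaner, since the relation has an independent geometric justification (the $2\times2$ minors cut out $S$ scheme-theoretically). Your alternative endgame via the Segre class of $N_{S/\PP^5}$ is also fine and makes the proof entirely self-contained without appealing to duality.
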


\medskip\noindent
Using this lemma, it is straightforward to obtain that $H_2=2H_1-E_1$ and $H_1=2H_2-E_2$. There is a $\ZZ/2$-symmetry in the previous lemma induced by the Gauss map $C\mapsto C^*$, where $C^*$ denotes the dual conic of $C$. We make use of this symmetry in order to simplify our computations.

\section{Bia\l{}ynicki-Birula Decomposition of $M$}\label{II}

\medskip\noindent
In this section, we compute the dimension of the affine cells of the Bia\l{}ynicki-Birula decomposition of $M$ following \cite{BB}. This decomposition is induced by a $\CC^*$-action on $M$ inherited from the plane $\PP^2$. Let us state, in our context, the theorem we will use.

\begin{thm}\cite{BB}
Let $X$ be a smooth projective variety. Suppose there is an action of $\CC^*$ on $X$. If this action has a finite number of isolated fixed points $\{p_1,\ldots
,p_n\}$, then
\begin{itemize}
\item There exists a decomposition $X=\bigcup_i
  C_{p_i}$, where each $C_{p_i}$ is an affine variety containing a unique $p_i$, called center.
\item The tangent space at any $p_i$, $T_{p_i}X$, becomes a $\CC^*$-module. Hence, there is a decomposition
$$T_{p_i}X=T^{+}\oplus T^{-1},$$ where $T^{-1}$ denotes the subspace upon which the $\CC^*$-action has negative weights, and $C_{p_i}\cong
T^{-1}$ as affine varieties.
\end{itemize}
\end{thm}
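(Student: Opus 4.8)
The plan is to realize each cell $C_{p_i}$ as the set of points whose orbit limits to the center $p_i$, and then to match its local structure with the weight decomposition of the tangent space. First I would show that the limit of every orbit exists. Since $X$ is projective and hence complete, the orbit morphism $\CC^*\to X$, $t\mapsto t\cdot x$, extends to a morphism $\PP^1\to X$ by the valuative criterion of properness, and the images of $0$ and $\infty$ are necessarily $\CC^*$-fixed, hence among the $p_i$. Fixing the convention $C_{p_i}=\{x\in X:\lim_{t\to\infty}t\cdot x=p_i\}$, the uniqueness of limits in a separated scheme shows that the $C_{p_i}$ are pairwise disjoint and cover $X$, yielding the set-theoretic decomposition $X=\bigcup_i C_{p_i}$.

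Next I would analyze the induced linear action on $T_{p_i}X$. As $\CC^*$ fixes $p_i$, it acts linearly on the tangent space, which splits into weight spaces $T_{p_i}X=\bigoplus_n(T_{p_i}X)_n$, with $t$ acting by $t^n$ on the $n$-th summand. A nonzero weight-zero summand would yield a positive-dimensional family of fixed directions, and therefore a positive-dimensional component of the fixed locus through $p_i$; since the fixed points are isolated, there is no such summand, and $T_{p_i}X=T^+\oplus T^-$ splits into strictly positive and strictly negative weights. With the chosen convention, $T^-$ is precisely the span of the directions along which the orbit flows into $p_i$ as $t\to\infty$.

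The decisive step is to upgrade this to an isomorphism of varieties $C_{p_i}\cong T^-$. Here I would invoke Sumihiro's theorem to produce a $\CC^*$-invariant affine open neighborhood $U$ of $p_i$ on which the action is linearizable, so that the completed local ring at $p_i$ is identified $\CC^*$-equivariantly with the completed symmetric algebra on the cotangent space $T_{p_i}^*X$. In this linear model the set of points flowing to the origin as $t\to\infty$ is visibly the coordinate subspace $T^-$, which identifies $C_{p_i}\cap U$ with an open subset of $T^-$. I would then check that the orbit flow, together with the retraction $x\mapsto\lim_{t\to\infty}t\cdot x$, glues these local identifications into a morphism $C_{p_i}\to T^-$ that is an isomorphism of schemes; since $T^-$ is an affine space whose origin is the unique fixed point, this also yields the asserted description of $C_{p_i}$ as an affine variety with unique center $p_i$.

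The main obstacle is exactly this last step. The existence of limits is formal from completeness, and the weight splitting is linear algebra once isolatedness of the fixed points is used; the real work is showing that $C_{p_i}$ is smooth and globally isomorphic to $T^-$ as a scheme, rather than merely set-theoretically. I expect the difficulty to concentrate in the equivariant linearization provided by Sumihiro and in verifying that the locally defined identifications are mutually compatible, so that the isomorphism extends off the center without obstruction coming from the higher-order terms of the action.
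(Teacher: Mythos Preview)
The paper does not prove this theorem at all: it is quoted verbatim from Bia\l{}ynicki-Birula's original paper \cite{BB} and used as a black box to compute cell dimensions for $M$. There is therefore no ``paper's own proof'' to compare your proposal against.

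That said, your outline is a reasonable sketch of the standard argument. The decomposition via limits and the weight splitting of $T_{p_i}X$ are correct and straightforward, and you are right that the substantive content lies in promoting the set-theoretic cell to a scheme isomorphic to the affine space $T^{-}$. Two cautions on that step: Sumihiro's theorem gives a $\CC^*$-invariant affine neighborhood and an equivariant closed embedding into a linear representation, but it does not by itself identify the completed local ring with the completed symmetric algebra on the cotangent space in a way that globalizes; one still needs Luna's \'etale slice theorem (or the explicit analysis in Bia\l{}ynicki-Birula's paper) to obtain an equivariant \'etale or isomorphic model near $p_i$. Also, your ``gluing'' of local identifications is not how the argument actually runs: one shows directly that the retraction $C_{p_i}\to\{p_i\}$ extends to an affine-bundle structure (here trivial, since the base is a point), so the global statement comes from a single local computation rather than a patching procedure. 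These are the genuine technical points you would need to supply to turn your sketch into a proof.
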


\medskip\noindent
Since $M\cong \mathrm{Bl}_S( \PP^5)$ is isomorphic to the blowup of $\PP^5$ along the Veronese surface $S$, we will compute the dimension of the BB-cells of $M$ by first computing the BB-cells of $\PP^5$.
Let us denote by $[a:\ldots :f]$ the homogeneous coordinates for the linear system $|\OO_{\PP^2}(2)|\cong \PP^5$. There is an induced action of $\CC^*\! \times \CC^*$ on $|{O}_{\PP^2}(2)|$ given by $[at_0^2:bt_0t_1:ct_1^2:dt_0:et_1:f]$. Let $T$ be the image of $\CC^*$ in $\CC^*\!\times \CC^*$ under the embedding $t\mapsto (t,t^k)$, $k\gg 0$. We call the action of this embedded torus $T$-action. This action of $T$ on $\PP^5$ has six isolated fixed points, $p_i=[0:\ldots :1:\ldots :0]\in \PP^5$, which implies that it induces a Bia\l{}ynicki-Birula decomposition of $\PP^5$.
According to the Theorem above, the dimension of each BB-cell is the number of negative weights of the $T$-action on each tangent space $T_{p_i}\PP^5$. The following table shows the dimension of the BB-cells, and their respective centers. It also shows $\CC^* \! \times \CC^*$ acting on each tangent space $T_{p_i}\PP^5$ by a diagonal matrix, which we denote by $\mbox{diag}$.

\begin{equation}\label{TABLE1}
\begin{tabular}{ccccc}\hline
center & cell-dimension & $\CC^*\times \CC^*$-action on $T_{p_i}\PP^5$ \\
\hline
 \\
$p_0$ & 2 & $\mbox{diag}(t_0^{-1}t_1,t_0^{-2}t_1^2,t_0^{-1},t_1t_0^{-2},t_0^{-2})$ \\[2mm]
$p_1$ & 4 & $ \mbox{diag}(t_0t_1^{-1},t_0^{-1}t_1,t_1^{-1},t_0^{-1},t_0^{-1}t_1^{-1})$ \\[2mm]
$p_2$ & 5 & $\mbox{diag}(t_0^2t_1^{-2},t_0t_1^{-1},t_0t_1^{-2},t_1^{-1},t_1^{-2})$ \\[2mm]
$p_3$ & 1 & $\mbox{diag}(t_0,t_1,t_1^{2}t_0^{-1},t_0^{-1}t_1,t_0^{-1})$  \\[2mm]
$p_4$ & 3 & $\mbox{diag}(t_0^{2}t_1^{-1},t_0,t_1,t_0t_1^{-1},t_1^{-1})$ \\[2mm]
$p_5$ & 0 & $\mbox{diag}(t_0^2,t_0t_1,t_1^{2},t_0,t_1)$ \\ \hline
\end{tabular}
\end{equation}

\medskip\noindent
In order to describe the BB-decomposition of the space of complete conics $M\cong \mathrm{Bl}_S( \PP^5)$, let us first determine the fixed points of the action of $T$ on $M$. Since the surface $S$ is invariant under the action of $T$ on $\PP^5$, then we may write the action of $T$ on $M$ by analyzing how it acts on the normal bundle $N_{S/\PP^5}$.

\medskip\noindent
The surface $S$ is the image of the map $\nu:[x,y,z]\mapsto [x^2:xy:y^2:xz:yz:z^2]$. Three $T$-fixed points of $\PP^5$ lie on the Veronese surface $S=\nu(\PP^2)\subset \PP^5$, namely $p_0,p_2,p_5$. It suffices to analyze the action of $T$ on the fibers $\pi^{-1}(p_i)$, $i=0,2,5$, where $\pi:M\rightarrow \PP^5$ denotes the blowup map. Here, we only examine the case for the fiber $\pi^{-1}(p_0)$, as the other two cases are similar. The point $p_0$ is the image of $[1:0:0]$ under $\nu$, hence looking at the affine chart $p_0\ne 0$ of $\PP^5$, the derivative 
$$
d\nu_{|(0,0)}=\left( \begin{array}{ccccc}
1 & 0 & 0 & 0 & 0 \\
0 & 0 & 1 & 0 & 0
\end{array} \right)$$
tells us how the tangent space $T_{p_0}S$ sits inside the tangent space $T_{p_0}\PP^5$. Since we know that $\CC^*\! \times \CC^*$ acts on $T_{p_0}\PP^5$ by the diagonal matrix indicated in (\ref{TABLE1}), we conclude that $\CC^*\! \times \CC^*$ acts on both $T_{p_0}S$, and on the fiber over ${p_0}$ of the normal bundle $N_{S/\PP^5}$, by the following matrices, respectively
\begin{equation}\label{MATRICESI}
\left( \begin{array}{cc}
t_0^{-1}t_1 & \\
 & t_0^{-1} \end{array} \right), \quad \left( \begin{array}{ccc}
t_0^{-2}t_1^2 &  &  \\
  & t_1t_0^{-2} & \\
& &t_0^{-2}
\end{array} \right).\end{equation}
On the fiber $\pi^{-1}({p_0})$, we have three $T$-fixed points; let us denote them by $p_0',p_0'',p_0'''$. Similarly, we have six more points $p_2',p_2'',p_2'''$, and $p_5',p_5'',p_5'''$, from which we conclude that there are $12$ points on $M$ fixed by $T$; namely $p_0',p_0'',\ldots, p_5''',p_1,p_3,p_4$.

\medskip\noindent
The table \ref{TABLE1} lists the dimension of the BB-cells of $M$ with centers $p_1,p_3,p_4$. In order to complete the computation of the dimension of the BB-cells of $M$, we now compute the dimension of the BB-cells whose centers are $p_0',p_0'',\ldots ,p_5'''$.

\medskip\noindent
Let us compute the dimension of the BB-cell of $M$ whose center is $p_0'$. If $E$ stands for the exceptional divisor of the blowup $\pi:M\rightarrow \PP^5$, then we have that $T_{p_0'}M=T_{p_0'}E\oplus N_{E/M}(p_0')$, and this decomposition is compatible with the action of $T$. The space $T_{p_0'}E$ can be further decomposed as $T_{p_0'}E=T_{p_0'}S\oplus T_{p_0'}F$, where $T_{p_0'}S$ denotes tangent space to the Veronese surface $S=\nu(\PP^2)$, with the fiber $F=\pi^{-1}(p_0)$. This decomposition is also compatible with the action of $T$. Therefore, we have  
 $$T_{p_0'}M=T_{p_0'}E\oplus N_{E/M}(p_0')=T_{p_0'}S \oplus T_{p_0'}F\oplus N_{E/M}(p_0').$$

\medskip\noindent
In order to compute the dimension of the BB-cell at $p_0'$, we count the negative weights of the action of $T$ on each of the factor above. In fact, the matrices in (\ref{MATRICESI}) reveal that $T$ acts with one negative weight on $T_{p_0'}S$ and two negative weights on $T_{p_0'}F$. Since $N_{E/M}\cong \OO_E(-1)$ is the tautological bundle over $E$, we have that the action of $\CC^*\times \CC^*$ on the fiber $\OO_E(-1)_{p_0'}$ is given by the coordinate of $p_0'$, which is $t_0^{-2}t_1^2$, and has positive $T$-weight. We conclude that the dimension of the BB-cell whose center is $p_0'$ is $1+2+0=3$.

\medskip\noindent
The rightmost column of the following table lists the dimension of the BB-cells of $M$ induced by $T$. The centers of the BB-cells are listed in the leftmost column. We also list the negative weights of the $T$-action on each of the factors $T_{q}S , T_{q}F, N_{E/M}(q)$.
$$
\begin{tabular}{cccc|c}
\hline
point &$T_qS$ & $T_qF$ & $N_{E/M}(q)$ &cell-dimension\\ \hline
$p_0'$ & 1 & 2 & 0&3\\[2mm]
$p_0''$ & 1 & 1 & 0 &2\\[2mm]
$p_0'''$ & 1 & 0 & 1&2\\
  \hline
$p_2'$ & 2 & 2 & 1&5\\[2mm]
$p_2''$ & 2 & 1 & 1 &4\\[2mm]
$p_2'''$ & 2 & 0 & 1&3\\
\hline
$p_5'$ & 0 & 0 & 0&0\\[2mm]
$p_5''$ & 0 & 1 & 0 &1\\[2mm]
$p_5'''$ & 0 & 2 & 0&2\\
\hline
\end{tabular}
$$

\medskip\noindent
The BB-cells with centers at $p_1,p_3,p_4$ have dimensions $4,1,3$, respectively. Hence, there are three BB-cells of $M$ of dimensions $3$ and $2$. Also, there are two BB-cells of dimensions $1$ and $4$.

\medskip
\section{Cycles of $M$ of dimension $3$}

\medskip\noindent

\begin{theorem}\label{THM1} The space $A_3(M)\otimes \QQ$ has dimension $3$. The cone of effective $3$-cycles is generated by the following classes $$\mathrm{Eff}_3(M)=\langle H_1E_1, H_2E_2 , E_1E_2 \rangle .$$
\end{theorem}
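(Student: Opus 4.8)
The plan is to compute the cone $\EFF_3(M)$ in two stages: first establish that $\dim A_3(M)\otimes\QQ = 3$ and identify the three listed classes as generators of the vector space, then prove that the cone they span is exactly the effective cone. The vector-space statement should follow from the blowup description $M\cong\mathrm{Bl}_S(\PP^5)$: the Chow groups of a blowup along a smooth center decompose via the projective bundle formula, so $A_3(M)$ is generated by pullbacks of cycles from $\PP^5$ together with cycles supported on the exceptional divisor $E_1$. Using Lemma \ref{CHAR} and the relations $H_2=2H_1-E_1$, $H_1=2H_2-E_2$ derived from it, I would express everything in terms of the monomials $H_1E_1$, $H_2E_2$, $E_1E_2$ and check that these three classes are linearly independent by pairing against a dual basis of $N^3(M)$ (for instance against $H_1^3$, $H_2^3$, and a mixed monomial), computing the intersection numbers through Lemma \ref{CHAR}.

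**Next I would** prove that each of the three classes is effective and extremal, and that together they span the whole effective cone. Effectivity is geometric: $H_iE_i$ and $E_1E_2$ are honest subvarieties (intersections of the effective divisors $H_i$, $E_i$), so their classes lie in $\EFF_3(M)$. The substantive direction is the reverse containment, and here I would exploit the $SL_3$-structure emphasized in the introduction. The strategy flagged there is to reduce $\EFF_3(M)$ to the effective cones of the two boundary strata $E_1$ and $E_1E_2$, which are closures of $SL_3$-orbits. Concretely, given an arbitrary irreducible $3$-dimensional subvariety $Z\subset M$, I would use the $SL_3$-action to degenerate $Z$ (via a one-parameter subgroup, so that its class is preserved) into a cycle supported on the $SL_3$-invariant boundary $E_1\cup E_2$, and then further into the closed orbit or the divisors $H_iE_i$. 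This propagates the effective-cone computation from the smaller, more homogeneous subvarieties upward.

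**The hard part will be** controlling the limit of the degeneration and showing that the resulting boundary cycle decomposes into nonnegative combinations of exactly the three proposed generators, with no extra extremal rays appearing. For this I would need an effective version of the statement that $\EFF_3(E_1)$ and $\EFF_3(E_1E_2)$ are generated by restrictions/pushforwards of the listed classes, which requires understanding the intersection theory on these orbit closures explicitly (e.g.\ that $E_1$ is a $\PP^2$-bundle over $S\cong\PP^2$ and $E_1E_2$ is the closed orbit). A parallel and equally important task is to verify extremality: I would dualize, computing $\NEF^3(M)=\EFF_3(M)^\vee$ and checking that each candidate generator spans a genuine facet by exhibiting a nef class pairing to zero with it and positively with the others. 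The $\ZZ/2$ Gauss-map symmetry interchanging $(H_1,E_1)\leftrightarrow(H_2,E_2)$ should cut the casework roughly in half, since it permutes the generators $H_1E_1\leftrightarrow H_2E_2$ while fixing $E_1E_2$.
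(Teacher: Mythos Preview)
Your outline is sound, and both the dimension count and the ``obvious'' inclusion are handled correctly (the paper gets $\dim A_3=3$ from the BB-cell count of Section~\ref{II} rather than the blowup formula, but either works). However, your main strategy for the hard direction---degenerating an arbitrary effective $3$-cycle into the boundary via a one-parameter subgroup---is \emph{not} the route the paper takes.

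The paper argues entirely on the dual side. It computes the dual cone $\langle H_1E_1, H_2E_2, E_1E_2\rangle^\vee$ explicitly in the basis $\{H_1^2E_1,H_2^2E_2,H_1^3\}$ of $A_2(M)\otimes\QQ$, finds (after using the relation $2H_1^3-3H_1^2H_2+3H_1H_2^2-2H_2^3=0$) that it equals $\langle H_1^3,\,H_2^3,\,\tau\rangle$ with $\tau=(H_1+H_2)E_1E_2$, and then proves each of these three $2$-cycle classes is nef. The classes $H_i^3$ are nef for free, so the entire content is packed into the single Lemma~\ref{tau}: $\tau$ is nef. That lemma takes an arbitrary irreducible threefold $Z$ and does a case analysis on $\dim(Z\cap E_1)$ and $\dim(Z\cap E_1E_2)$, using Kleiman transversality for the generic case and reducing the deepest boundary case to a nefness statement inside $E_1\cong\PP^{2[2]}$ (Lemma~\ref{HIL2}). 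So the $SL_3$-stratification is used, but to organize a nefness check on the dual side, not to degenerate $Z$ itself.

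Your degeneration approach can be made to work---it is essentially the Brion/FMSS argument that on a spherical variety every effective class is a nonnegative combination of $B$-orbit closures---but be aware that a single one-parameter subgroup will not in general push $Z$ all the way into $E_1\cup E_2$; you need either to iterate or to invoke the spherical machinery directly, and then you still need $\EFF_3(E_i)=\langle H_iE_i,E_1E_2\rangle$ (Remark~\ref{HP2}) as input. The paper's dual method sidesteps this by never moving $Z$: it moves the test class instead via Kleiman, which is easier because $\tau$ already lives in the closed orbit $E_1E_2$. What your approach buys is a cleaner connection to the general spherical theory; what the paper's approach buys is an explicit identification of the single nontrivial nef class $\tau$ (and hence the corollary $\NEF^3(M)=\langle H_1^3,H_2^3,\tau\rangle$) together with a self-contained verification.
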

\begin{proof}The dimension of $A_3(M)\otimes {\QQ}$ is $3$ since there are three BB-cells of dimension $3$ \cite{FULTON}. 

In order to describe the cone of effective $3$-cycles, we make use of the following strategy.
We know that $\langle H_1E_1, H_2E_2 , E_1E_2 \rangle \subset \mathrm{Eff}_3(M)$. We want to show that the opposite containment  holds, which is equivalent to showing the following inclusion of the dual cones $\langle H_1E_1, H_2E_2 , E_1E_2 \rangle^{\vee} \subset \mathrm{Eff}_3(M)^{\vee}$. By definition, we have $\overline{\mathrm{Eff}}_3(M)^{\vee}\!=\mathrm{Nef}^3(M)$. Hence, we get the desired inclusion of dual cones as long as the classes in $\langle H_1E_1, H_2E_2 , E_1E_2 \rangle^{\vee}$ are nef. Nefness is what we prove next.

By definition, the dual cone $\langle H_1E_1, H_2E_2 , E_1E_2 \rangle^{\vee}$ consists of all the classes $\alpha\in A_2(M)$ such that $\alpha\cdot \beta \ge 0$, where $\beta\in \langle H_1E_1, H_2E_2 , E_1E_2 \rangle$. 
In terms of a basis, this definition reads as follows. We need $a,b,c\in \QQ$ such that $(aH_1^2E_1+bH_2^2E_2+cH_1^3)(lH_1E_1+rH_2E_2 + sE_1E_2)\ge 0 \ ,$
where $r,l,s\ge 0$, and $\{H_1^2E_1, H_2^2E_2, H_1^3\}$ is a basis for $A_2(M)\otimes \QQ$ (the fact that this space has dimension 3 is proved in Theorem \ref{THM22}). 
We split up this inequality into three which, using Lemma \ref{CHAR} and $E_1=2H_1-H_2$ and $E_2=2H_2-H_1$, yields 
\begin{itemize}
\item[]
\begin{equation}
\begin{aligned}
H_1E_1.\alpha & = H_1E_1(aH_1^2E_1+bH_2^2E_2+cH_1^3) \nonumber\\
&=8b \ \geq 0,
\end{aligned}
\end{equation}

\item[]
\begin{equation}
\begin{aligned}
H_2E_2.\alpha &=H_2E_2(aH_1^2E_1+bH_2^2E_2+cH_1^3) \nonumber\\
 &=8a+6c \geq 0,
\end{aligned}
\end{equation}

\item[]
\begin{equation}
\begin{aligned}
E_1E_2.\alpha & = E_1E_2(aH_1^2E_1+bH_2^2E_2+cH_1^3) \nonumber\\
&=-8a-8b \geq 0 \ .
\end{aligned}
\end{equation}
\end{itemize}
From these three inequalities, it follows that the cone $\langle H_1E_1, H_2E_2 , E_1E_2 \rangle^{\vee}$ is generated by the following three classes,
$$\alpha_1=4H_1^3-3H_1^2E_1, \quad  \alpha_2=-3H_1^2E_1+3H_2^2E_2 +4H_1^3,\quad \alpha_3= H_1^3.$$
Since $A_2(M)\otimes \QQ$ has dimension 3, the linear span of $\{H_1^3,H_2^3,H_1^2H_2,H_2^2H_1\}$ must have at least one linear relation. Indeed, set $xH_1^3+yH_2^3+zH_1^2H_2+wH_2^2H_1=0$, for $x,y,z,w \in \QQ$. By intersecting this linear combination with the classes $H^2_1$, $H_1H_2$ and $E_1E_2$, we find linear relations among $x,y,z,w$. Such relations yield $2H_1^3-3H_1^2H_2+3H_1H_2^2-2H_2^3=0$, which implies that $2\alpha_1=(H_1+H_2)E_1E_2$ and $\alpha_2=4H_2^3$.

Since $H_1^3,H_2^3$ are nef classes, the result follows if the class $\tau=(H_1+H_2)E_1E_2$ is also nef. In other words, 
$\mbox{Eff}_3(M)=\langle H_1E_1, H_2E_2 , E_1E_2 \rangle$ if and only if the class $\tau$ is a nef class.
Nefness is proved in the next Lemma.
\end{proof}

\begin{lema}\label{tau}
The class $\tau=(H_1+H_2)E_1E_2\in A_2(M)$ is nef.
\end{lema}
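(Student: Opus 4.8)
The plan is to prove that $\tau=(H_1+H_2)E_1E_2$ is nef by testing it against a generating set of the cone $\EFF_2(M)$, since nefness of a $3$-cycle means nonnegative intersection with every effective $2$-cycle. By Theorem \ref{THM2}, the cone $\EFF_2(M)$ is generated by the four classes $H_1^2E_1$, $H_2^2E_2$, $H_1E_1E_2$, $H_2E_1E_2$, so it suffices to check that $\tau$ pairs nonnegatively with each of these four generators. In principle this reduces the whole lemma to four explicit intersection-number computations on $M$.

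First I would reduce everything to monomials in $H_1$ and $H_2$ using the relations $E_1=2H_1-H_2$ and $E_2=2H_2-H_1$ established after Lemma \ref{CHAR}. Substituting these into both $\tau$ and each generator turns every intersection product into a degree-$5$ monomial in $H_1,H_2$, which can then be evaluated directly from the characteristic numbers in Lemma \ref{CHAR}, namely $H_1^5=H_2^5=1$, $H_1^4H_2=H_1H_2^4=2$, and $H_1^2H_2^3=H_1^3H_2^2=4$. The $\ZZ/2$-symmetry $C\mapsto C^*$ swapping the two rulings $(H_1,E_1)\leftrightarrow(H_2,E_2)$ should halve the work: it pairs $H_1^2E_1$ with $H_2^2E_2$ and $H_1E_1E_2$ with $H_2E_1E_2$, and fixes $\tau$, so I would only need to compute two of the four pairings and deduce the rest by symmetry.

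One subtlety is that Theorem \ref{THM2} is stated later in the paper than Lemma \ref{tau}, so to avoid a circular dependence I would instead argue geometrically: the four listed generators of $\EFF_2(M)$ are all effective (they are genuine intersections of the effective divisors $H_i$ and $E_i$, hence represented by honest subvarieties), and that is all I need, since showing $\tau\cdot Z\ge 0$ for effective $Z$ only requires a spanning set of \emph{effective} classes, not the full extremal description. Thus I would phrase the proof as: every effective $2$-cycle is a nonnegative combination of the four classes $H_1^2E_1,H_2^2E_2,H_1E_1E_2,H_2E_1E_2$ (which I may take as given from the forward reference, or establish independently), and $\tau$ meets each nonnegatively.

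I expect the main obstacle to be conceptual rather than computational: confirming that testing against these four generators genuinely certifies nefness, i.e.\ that they really do span $\EFF_2(M)$ so that no effective $2$-cycle lies outside their cone. The arithmetic itself — expanding $(3H_1-H_2)(3H_2-H_1)(2H_1-H_2)(2H_2-H_1)$ against each generator and summing the monomial contributions via Lemma \ref{CHAR} — is routine and self-checking, since by the Gauss symmetry the answers must come in equal pairs and each must turn out nonnegative. If any single pairing came out negative, $\tau$ would fail to be nef and Theorem \ref{THM1} would collapse, so the computation doubles as a consistency check on the claimed generators of $\EFF_3(M)$.
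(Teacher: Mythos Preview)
Your proposal rests on a dimension error that makes the whole strategy collapse. The class $\tau=(H_1+H_2)E_1E_2$ is the intersection of three divisors in the $5$-fold $M$, so it has codimension $3$, i.e.\ $\tau\in A_2(M)$, not $A_3(M)$. Consequently, nefness of $\tau$ must be tested against irreducible \emph{threefolds} (codimension-$2$ cycles), not against $\EFF_2(M)$. Pairing $\tau$ with your four proposed classes $H_1^2E_1,H_2^2E_2,H_1E_1E_2,H_2E_1E_2$ lands in $A_{-1}(M)=0$; every such product is automatically zero, so the computation you outline is vacuous and proves nothing. (You would notice this immediately when trying to evaluate: the monomials $H_1^aH_2^b$ you get have $a+b=6$, and Lemma~\ref{CHAR} only supplies the degree-$5$ numbers.)

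If you correct the target and try to test $\tau$ against a generating set of $\EFF_3(M)$ instead, you run into genuine circularity: Lemma~\ref{tau} is exactly the missing ingredient in the proof of Theorem~\ref{THM1}, which is where $\EFF_3(M)=\langle H_1E_1,H_2E_2,E_1E_2\rangle$ is established. Your attempted workaround (``the generators are visibly effective, and that is all I need'') does not help: knowing that three specific classes are effective does not tell you that every effective $3$-cycle lies in their nonnegative span, and that containment is precisely the nontrivial direction of Theorem~\ref{THM1}. So the forward reference cannot be avoided this way.

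The paper's proof is therefore of a genuinely different nature. It shows $Z\cdot\tau\ge 0$ for an arbitrary irreducible threefold $Z$ directly, by case-splitting on how $Z$ meets the $SL_3$-orbit stratification $M=U\cup E_1^\circ\cup E_2^\circ\cup E_1E_2$. The key geometric inputs are: (i) $\tau$ restricts to a very ample divisor on the closed orbit $E_1E_2$, so any $Z$ meeting $E_1E_2$ in a curve pairs nonnegatively; (ii) when $Z\subset E_1$, one pushes the computation into $E_1\cong\PP^{2[2]}$ and invokes the auxiliary Lemma~\ref{HIL2} on the nefness of a class $\tau'$ there; (iii) Kleiman transversality lets one replace $Z$ by a general $SL_3$-translate to reduce to these cases. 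None of this requires knowing $\EFF_3(M)$ in advance.
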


\begin{proof} Throughout this proof $Z$ denotes an irreducible threefold in $M$. We wish to show that $Z.\tau\ge 0$ for any $Z$. 

The space $M$ is stratified into $SL_3\CC$-orbits and we can differentiate the irreducible threefolds in $M$ based on how they intersect these strata. We will prove this lemma by considering all the possibilities for these intersections and showing that in each of them, we have that $Z.\tau\ge 0$. The first case is the following. If $Z$ intersects transversally all the strata, then the intersection $Z\cap E_1E_2$ is an irreducible curve, which implies that $Z.\tau \ge0$, as $\tau$ is an ample divisor in $E_1E_2$.

Let us analyze the cases in which the intersection of $Z$ with the $SL_3$-strata may not be transversal.  Since $\tau\subset E_1E_2$, it suffices to analyze the dimensions of the intersections $Z\cap E_1$ and $Z\cap E_1E_2$ listed below. The rows of the following table show the possibilities we are going to examine.

$$
\begin{tabular}{|c|cc|}
\hline
case & $\mbox{dim }Z\cap E_1$  & $\mbox{dim }Z\cap E_1E_2$ \\ \hline
I & 2 &  2,1  \\[.1cm]\hline
II & 3 & 3,\  2\\
    \hline

  \end{tabular}
$$

\medskip
\begin{itemize}
\item[(I)] Suppose $\mbox{dim }Z\cap E_1 =2$, and $\mbox{dim }Z\cap E_1E_2 =1$. Since $H_1+H_2$ is the hyperplane section of the embedding $M\subset \PP^9\times \PP^{9*}$, then $\tau$ is a very ample divisor in $E_1E_2$. By Kleiman's Transversality Theorem \cite{KTT} the translate $Z^g\cap E_1E_2$, where $g\in U\subset SL_3$ and $U$ is open dense, is an irreducible curve. Then, $Z^g.\tau >0$ which implies that $Z.\tau\ge0$.

Suppose $\mbox{dim }Z\cap E_1=2$ and $\mbox{dim }Z\cap E_1E_2=2$. This implies the intersection $Z.\tau$ may be considered as taking place in $E_1$. In other words, there is a cycle $\tau'\in A_2(E_1)$ such that $\iota_*(\tau')=\tau$, where $\iota:E_1\rightarrow M$ stands for the inclusion map and $Z.\tau=Z_{|E_1}.\tau'$. Thus, it suffices to show that $\tau'$ is a nef class in $A_2(E_1)\otimes \QQ$. This is proved in Lemma \ref{HIL2}.

\item[(II)] Suppose $\mbox{dim }Z\cap E_1 =3$, and $\mbox{dim }Z\cap E_2 =3$. This, implies that $Z\subset E_1E_2$. Since $Z$ and $E_1E_2$ are both irreducible subvarieties, then $[Z]=\beta E_1E_2$ for some $\beta>0$. Hence, from Lemma \ref{CHAR}, it follows that $Z.\tau\ge 0$. 

Secondly, suppose $\mbox{ dim }Z\cap E_1 =3$ and $\mbox{dim }Z\cap E_2=2$. Since $Z$ is irreducible, then it is a divisor in $E_1$, and thus $[Z]$ can be written as a nonnegative linear combination of the generators of $\EFF_4(E_1)$. Namely, $[Z]=\alpha H_1E_1+\beta E_1E_2$, where $\beta, \alpha \ge 0$ by Remark \ref{HP2}. Consequently, using Lemma \ref{CHAR} and the relations $E_1=2H_1-H_2$ and $E_2=2H_2-H_1$, we get
\begin{equation*}
\begin{aligned}
Z.\tau &= (\alpha H_1E_1+\beta E_1E_2).\tau \\
&= (\alpha H_1E_1+\beta E_1E_2).(H_1+H_2)E_1E_2 \\
&= (\alpha H_1E_1+\beta E_1E_2).(H_1E_1E_2+H_2E_1E_2) \\
&= \alpha E_1E_2(H_1E_1.H_1+H_1E_1.H_2)+\beta(E_1E_2.H_1E_1E_2+E_1E_2.H_2E_1E_2) \\
&=(\beta E_1E_2)(H_1+H_2)E_1E_2 \ge 0.
\end{aligned}
\end{equation*}
\end{itemize}
\end{proof}

\medskip\noindent
The divisor $E_1$, which parametrizes double lines with two marked-points, is isomorphic to the Hilbert scheme of 2 points on the plane; denoted by $\mathbb{P}^{2 [2]}$. Indeed, the morphism $f:E_1\rightarrow \PP^{2[2]}$ that maps a double line with two marked-points to the subscheme of $\PP^2$ supported on such marked-points induces an isomorphism by Zariski Main Theorem.  Let $ch$ be the Hilbert-Chow morphism $$ch:\PP^{2[2]}\longrightarrow \PP^{2(2)},$$ where $\PP^{2(2)}$ is the symmetric product. Let us denote by $B=\mathrm{exc}(ch)$ the exceptional locus of the morphism $ch$. 

\begin{rmk}\label{HP2}
The cone of effective divisors of $\mathbb{P}^{2 [2]}$ is generated by the following divisor classes $\EFF^1(\mathbb{P}^{2 [2]})=\langle B, F\rangle$, where $F$ is the divisor which induces the morphism $g:E_1\rightarrow \PP^{2*}$ which maps a subscheme $Z\in \PP^{2[2]}$ to the unique line that contains it. 
Let $\iota:E_1\rightarrow M$ be inclusion map. Observe that the induced morphism on cycles $\iota_*:A_2(E_1)\otimes \QQ\rightarrow A_2(M)\otimes \QQ$ is injective (Lemma \ref{CODII}), and that $\iota_* B\sim E_1E_2$ as well as $\iota_* F\sim H_1E_1$. 
\end{rmk}

\medskip\noindent
We may interpret the morphisms $ch$ and $g$ in terms of the geometry of $M$. Let  $\pi_i:M\rightarrow \PP^5$, ($i=1,2$), be the projection maps from the definition of $M$. Observe that ${\pi_1}_{|E_1}:E_1\rightarrow \PP^{2*}$ is the morphism $g$, and ${\pi_2}_{|E_1}:E_1\rightarrow \Delta$ is the Hilbert-Chow morphism $ch$.

\medskip\noindent
Let us define two cycles of dimension $2$ in $E_1$. Let $\xi$ be the locus of $E_1$ $(\cong \PP^{2[2]})$ of non-reduced subschemes of length $2$ whose support lies on a fixed line. Let $\sigma$ be the locus of $E_1$ of non-reduced subschemes of length $2$ whose unique lines that contains it belongs to a fixed pencil of lines. These two cycles are further analyzed in Lemma \ref{CODII}. Recall that $\tau= (H_1+H_2)E_1E_2$ and that $\iota: E_1\rightarrow M$ denotes the inclusion map.

\medskip
\begin{lema}\label{HIL2} The class $[\tau']=[\sigma+\xi] \in A_2(E_1)\otimes \QQ$ is nef. Moreover, $\iota_*\tau'\sim \tau$.
\end{lema}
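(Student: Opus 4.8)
The statement has two parts: the nefness of $\tau'=\sigma+\xi$ in $A_2(E_1)\otimes\QQ$, and the pushforward identity $\iota_*\tau'\sim\tau$. I would dispatch the identity first, by a projection-formula computation. Both $\sigma$ and $\xi$ lie in the boundary divisor $B=\mathrm{exc}(ch)\subset E_1$, and by Remark \ref{HP2} we have $\iota_*B\sim E_1E_2$, with $E_1E_2=\iota_*(E_2|_{E_1})$. After checking $E_2|_{E_1}=B$ (using $E_2=2H_2-H_1$ and the blow-up description $E_1\cong\PP(N_{S/\PP^5})$, which gives $E_1|_{E_1}=c_1(\OO_{E_1}(-1))$), the projection formula yields
$\tau=(H_1+H_2)\cdot E_1E_2=\iota_*\big((H_1+H_2)|_{E_1}\cdot B\big)$.
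So I would compute the restrictions $H_1|_{E_1}$, $H_2|_{E_1}$ in $\mathrm{Pic}(E_1)$ from the relations $H_2=2H_1-E_1$ and $E_1|_{E_1}$, intersect with $B$ inside the Chow ring of the $\PP^2$-bundle $g:E_1\to\PP^{2*}$, and match the resulting class against $[\sigma]+[\xi]$ as computed in Lemma \ref{CODII}. This is a finite bookkeeping computation.

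The heart of the lemma is nefness, and the key geometric input is that $\sigma$ and $\xi$ both lie in $B$, together with the observation that $B$ is \emph{homogeneous}. Indeed, a non-reduced length-$2$ subscheme is a point equipped with a tangent direction, so $B$ is precisely the flag variety $\mathrm{Fl}(1,2;\CC^3)=\{(p,\ell):p\in\ell\}$, on which $SL_3$ acts transitively; under this identification $\xi=\{(p,\ell):p\in L_0\}$ and $\sigma=\{(p,\ell):q_0\in\ell\}$ are exactly the two Schubert divisors, which generate $\mathrm{Eff}^1(B)$. To show $Z.\tau'\ge0$ for an arbitrary irreducible surface $Z\subset E_1$, I would split into two cases. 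If $Z\not\subset B$, then $\gamma:=Z\cap B$ has dimension $\le1$; translating $\sigma$ by a generic $g\in SL_3$ keeps $\sigma^g$ inside the homogeneous $B$, and Kleiman's theorem \cite{KTT} applied \emph{on} $B$ makes $\sigma^g\cap\gamma$, hence $Z\cap\sigma^g$, finite. Thus $Z$ and $\sigma^g$ meet properly in $E_1$, so $Z.\sigma=Z.\sigma^g\ge0$, and the same for $\xi$, giving $Z.\tau'\ge0$. If instead $Z\subset B$, then $Z$ is an effective divisor on the flag variety $B$, so $[Z]=a[\sigma]+b[\xi]$ with $a,b\ge0$, and $Z.\tau'=a(\sigma.\tau')+b(\xi.\tau')$.

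In both cases the proof reduces to checking that the two numbers $\sigma.\tau'$ and $\xi.\tau'$ are non-negative. I would compute these in the basis $\{h^2,\zeta h,\zeta^2\}$ of $A_2(E_1)\otimes\QQ$ furnished by $g:E_1\to\PP^{2*}$ (where $\zeta$ is the relative hyperplane class and $h=g^*\OO(1)$), using the intersection numbers of the bundle (normalized by $\zeta^2h^2=1$) and the classes of $\sigma,\xi,B$ from Lemma \ref{CODII}. The delicate point is that $\xi$ has \emph{negative} self-intersection, so $\xi.\tau'$ is the tightest of all the inequalities; the computation gives $\xi.\tau'=0$ while $\sigma.\tau'>0$, the borderline value consistent with nefness, showing moreover that $\tau'$ spans an extremal nef ray. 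The main obstacle is therefore not any one estimate but the structural reduction: since $E_1$ itself is not homogeneous one cannot move $Z$ directly, and the argument succeeds only because $\tau'$ is supported on the homogeneous boundary $B$, where both Kleiman's theorem and the explicit description of $\mathrm{Eff}^1$ are available.
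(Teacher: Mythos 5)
Your proposal is correct and follows essentially the same route as the paper: reduce to the $SL_3$-orbit stratification $E_1=E_1^{\circ}\cup B$, handle surfaces $Z\not\subset B$ by moving the cycle within the homogeneous boundary $B\cong\mathrm{Fl}(0,1)$ (the paper instead invokes ampleness of $\tau'|_B$ on the curve $Z\cap B$, but both arguments rest on the same homogeneity), and for $Z\subset B$ write $[Z]=a\sigma+b\xi$ with $a,b\ge 0$ via $\mathrm{Eff}^1(B)=\langle\sigma,\xi\rangle$ and compute. Your stated values $\xi.\tau'=0$ and $\sigma.\tau'>0$ are in fact the correct ones (with $\xi^2<0$ making $\xi.\tau'$ the borderline inequality); the paper's displayed $(a\sigma+b\xi).(\sigma+\xi)=0$ is only accurate as ``$\ge 0$,'' so your bookkeeping is, if anything, sharper.
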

\begin{proof} 
Notice that the classes $[\sigma]\sim \iota^*H_1E_2$ and $[\xi]\sim \iota^*H_2E_2$ which implies that $\iota_*(\sigma+\xi)\sim \tau$. See Lemma \ref{CODII}.

In order to show nefness of the class $\tau'$, let us apply the same argument as in Lemma \ref{tau}. We are going to omit the details of the intersection theory of $E_1$, but the reader must keep in mind that the enumerative geometry of the elements of $E_1$ (for example, the appropriate analog of Lemma \ref{CHAR}), is necessary in order to carry out the following analysis. 

We want to show that $\tau' .Z\ge 0$ for every irreducible surface $Z\subset E_1$. Observe that the decomposition of $E_1$ into $SL_3$-orbits is the following $E_1=E_1^{\circ}\cup B$, where $B$ parametrizes double lines with a unique marked-point. Now, we analyze the possibilities for the surface $Z$ to intersect these strata. The first case is the following. Let us suppose that $Z\subset E_1$ intersects all the strata transversally. This implies that $Z\cap B$ is an irreducible curve, which implies that $Z.\tau'\ge 0$ as $\tau'$ is an ample divisor in $B$. 

Let us suppose that $\mathrm{dim } Z\cap B=2$. In this case, $Z\subset B$, which implies that $[Z]=a\sigma+b\xi$, for $a,b\ge 0$ rational numbers (Lemma \ref{CODII}). Therefore 
\begin{equation*}
\begin{aligned}
Z.\tau' &= (a \sigma + b \xi).\tau' \\
&= (a \sigma + b \xi).( \sigma +  \xi) = 0.
\end{aligned}
\end{equation*}

\end{proof}


\begin{cor}
The cone of numerically effective $2$-cycles is generated by the following classes $\mathrm{Nef}^3(M)=\langle H_1^3, H_2^3, \tau\rangle$,
where $\tau = (H_1+H_2)E_1E_2$.
\end{cor}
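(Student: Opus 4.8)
The plan is to establish the corollary as the dual statement to Theorem \ref{THM1}. Since rational and numerical equivalence coincide on $M$, and since the paper has already proved $\overline{\EFF}_3(M)=\EFF_3(M)=\langle H_1E_1, H_2E_2, E_1E_2\rangle$, the cone $\NEF^3(M)$ is by definition the dual cone $\overline{\EFF}_3(M)^{\vee}$. Thus the entire content reduces to computing the dual of the three-dimensional simplicial cone $\langle H_1E_1, H_2E_2, E_1E_2\rangle$ and identifying its extremal rays with the classes $H_1^3$, $H_2^3$, and $\tau=(H_1+H_2)E_1E_2$.

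First I would recall from the proof of Theorem \ref{THM1} that the dual cone was already computed there: it is generated by $\alpha_1=4H_1^3-3H_1^2E_1$, $\alpha_2=-3H_1^2E_1+3H_2^2E_2+4H_1^3$, and $\alpha_3=H_1^3$. The key simplification, also established in that proof via the linear relation $2H_1^3-3H_1^2H_2+3H_1H_2^2-2H_2^3=0$ in $A_2(M)\otimes\QQ$, is that $2\alpha_1=(H_1+H_2)E_1E_2=\tau$ and $\alpha_2=4H_2^3$. Hence the three generators $\alpha_1,\alpha_2,\alpha_3$ of the dual cone are, up to positive scaling, exactly $\tau$, $H_2^3$, and $H_1^3$. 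This directly yields $\langle H_1E_1, H_2E_2, E_1E_2\rangle^{\vee}=\langle H_1^3, H_2^3, \tau\rangle$.

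It then remains to observe that each of these three generators is an honest nef class, so that the dual cone genuinely equals $\NEF^3(M)$ rather than merely a cone of numerical functionals. The classes $H_1^3$ and $H_2^3$ are nef because $H_1$ and $H_2$ are nef divisors (being pullbacks of the hyperplane section under the two projections), and products of nef divisor classes are nef. The nefness of $\tau$ is precisely the statement of Lemma \ref{tau}, already proved above. Therefore all three generators lie in $\NEF^3(M)$, and since they span the dual of $\overline{\EFF}_3(M)$, they span $\NEF^3(M)$ itself.

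I expect no serious obstacle here, since the corollary is essentially a repackaging of computations internal to the proof of Theorem \ref{THM1}; the only mild care needed is to confirm that the dual-cone generators were correctly normalized to $H_1^3$, $H_2^3$, and $\tau$, and to invoke Lemma \ref{tau} together with the nefness of the $H_i^3$ to conclude that the computed dual cone is the genuine nef cone. The duality $\overline{\EFF}_3(M)^{\vee}=\NEF^3(M)$, stated in the preliminaries, closes the argument.
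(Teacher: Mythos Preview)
Your proposal is correct and matches the paper's approach exactly: the corollary is stated without proof in the paper precisely because all the work was done inside the proof of Theorem \ref{THM1}, where the dual cone $\langle H_1E_1,H_2E_2,E_1E_2\rangle^{\vee}$ was computed, its generators rewritten as $H_1^3$, $H_2^3$, $\tau$ (up to positive scalars), and their nefness established via Lemma \ref{tau} and the nefness of $H_1,H_2$.
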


\medskip\noindent
The conclusion of this section is that the effective cone $\EFF_3(M)=\overline{\EFF}_3(M)$ is generated by three extremal cycles which parametrize the following families of complete conics: $H_2E_2$ parametrizes reducible conics with their node lying on a fixed line, its dual $H_1E_1$, which parametrizes double lines with two marked-points and such that these lines pass through a fixed point, and the cycle $E_1E_2$ which parametrizes double lines with a unique marked-point.

\section{Cycles of $M$ of dimension $2$}
\begin{theorem}\label{THM22}
The space $A_2(M)\otimes \QQ$ has dimension $3$. The cone of effective $2$-cycles $\mathrm{Eff}_2(M)$ is generated by the following classes
$$\mathrm{Eff}_2(M)=\langle H_1^2E_1,  H_2^2E_2, H_1E_1E_2, H_2E_1E_2\rangle.$$
\end{theorem}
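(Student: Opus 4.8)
The plan is to mirror the strategy used for Theorem \ref{THM1}, exploiting the same duality between effective and nef cones together with the $SL_3$-orbit stratification of $M$. First I would establish that $\dim A_2(M)\otimes\QQ = 3$; this follows immediately from the Bia\l{}ynicki-Birula computation of Section \ref{II}, which produced exactly three BB-cells of dimension $2$, so by \cite{FULTON} the group $A_2(M)\otimes\QQ$ has rank $3$. The containment $\langle H_1^2E_1, H_2^2E_2, H_1E_1E_2, H_2E_1E_2\rangle \subset \EFF_2(M)$ is clear since each generator is represented by an effective cycle (an intersection of the effective divisors $H_i$ and $E_i$). Note that since the ambient space is only $3$-dimensional, the four listed classes cannot be linearly independent; part of the work is to record the linear relation among them, most naturally by intersecting the four classes against a basis of $A_3(M)\otimes\QQ$ using Lemma \ref{CHAR} and the relations $E_1=2H_1-H_2$, $E_2=2H_2-H_1$.

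The heart of the argument is the reverse inclusion $\EFF_2(M)\subset \langle H_1^2E_1, H_2^2E_2, H_1E_1E_2, H_2E_1E_2\rangle$, which by duality is equivalent to showing that every generator of the dual cone $\langle H_1^2E_1, H_2^2E_2, H_1E_1E_2, H_2E_1E_2\rangle^{\vee}$ is nef in codimension $2$, i.e.\ lies in $\NEF^2(M)$. Concretely, I would write a general class $\alpha \in A_3(M)\otimes\QQ$ in the basis $\{H_1E_1, H_2E_2, E_1E_2\}$ furnished by Theorem \ref{THM1}, impose the four inequalities $\alpha\cdot H_1^2E_1\ge 0$, $\alpha\cdot H_2^2E_2\ge 0$, $\alpha\cdot H_1E_1E_2\ge 0$, $\alpha\cdot H_2E_1E_2\ge 0$, and solve this linear system to extract the extremal generators of the dual cone. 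The computation of these intersection numbers reduces to Lemma \ref{CHAR} after expanding everything in the $H_1,H_2$ variables, exactly as in the proof of Theorem \ref{THM1}. This produces an explicit finite list of candidate extremal rays for $\NEF^2(M)$.

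The main obstacle, just as in Lemma \ref{tau} and Lemma \ref{HIL2}, will be verifying that the extremal generators of the dual cone obtained from the linear algebra are genuinely nef — that is, proving $\gamma\cdot Z\ge 0$ for every irreducible surface $Z\subset M$, not merely against the finitely many boundary strata. I expect the products $H_i^3$ and the self-translates of ample restrictions to be handled by positivity, but the mixed classes supported on $E_1E_2$ will require the orbit-by-orbit casework driven by the $SL_3$-stratification $M=U\cup(E_1^{\circ}\cup E_2^{\circ})\cup E_1E_2$. For each such candidate I would stratify by the dimensions of $Z\cap E_1$ and $Z\cap E_1E_2$, use Kleiman transversality \cite{KTT} to reduce the transverse case to intersecting with an ample divisor on the relevant orbit closure, and in the non-transverse cases push the computation onto $E_1\cong\PP^{2[2]}$ and onto the closed orbit $E_1E_2$, where the effective cones are already understood via Remark \ref{HP2} and Lemma \ref{CODII}. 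The technical burden is that the surfaces $Z$ sit in a $5$-fold, so there are more intersection configurations to enumerate than in the threefold case; the saving grace is that by the $\ZZ/2$-symmetry induced by the Gauss map noted after Lemma \ref{CHAR}, the cases involving $H_2^2E_2$ and $H_2E_1E_2$ follow from those involving $H_1^2E_1$ and $H_1E_1E_2$, roughly halving the required casework.
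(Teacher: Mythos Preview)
Your proposal is correct and follows essentially the same route as the paper: dimension count via the BB-cells, the obvious inclusion, passage to the dual cone, and then an $SL_3$-orbit casework to verify nefness of the dual generators. Two small remarks: the dual cone lives in $A^2(M)=A_3(M)$, so its extremal rays are codimension-$2$ classes such as $H_1^2,\,H_1H_2,\,H_2^2$ rather than $H_i^3$; and the paper's computation yields the dual cone as $\langle H_1^2,\,H_1H_2,\,H_2^2,\,\beta\rangle$ with $\beta=H_1^2-H_1H_2+H_2^2$, so three of the four generators are products of nef divisors and only $\beta$ requires the orbit-by-orbit analysis (the paper stratifies by $\dim Z\cap E_1$ and $\dim Z\cap E_2$ and uses the identity $2\beta=H_1E_1+H_2E_2$ in the transverse case).
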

\begin{proof} The space $A_2(M)\otimes \QQ$ has dimension $3$ since there are three BB-cells of dimension $2$ \cite{FULTON}.

It is clear that $\langle H_1^2E_1,  H_2^2E_2,  H_1E_1E_2, H_2E_1E_2\rangle \subset \mathrm{Eff}_2(M)$, and we wish to show that the opposite containment holds. In order to do that, we show the inclusion of the dual cones $\langle H_1^2E_1,  H_2^2E_2,  H_1E_1E_2, H_2E_1E_2\rangle^{\vee} \subset \mathrm{Eff}_2(M)^{\vee}$. By definition $\overline{\mathrm{Eff}}_2(M)^{\vee}=\mathrm{Nef}^2(M)$. Thus, we have the desired inclusion of dual cones as long as the classes in $\langle H_1^2E_1,  H_2^2E_2,  H_1E_1E_2,H_2E_1E_2\rangle^{\vee}$ are nef.
Nefness is what we prove next.

Note $\langle H_1^2E_1,  H_2^2E_2,  H_1E_1E_2, H_2E_1E_2\rangle^{\vee}=\langle H_1^2, H_1H_2, H_2^2, H_1^2-H_1H_2+H_2^2\rangle$. Indeed, in terms of the basis for $A_2(M)\otimes \QQ=\langle H_1^2E_1,H_2^2E_2,H_1^3\rangle$, we want to find $a,b,c\in \QQ$ such that
\begin{equation*}
\begin{aligned}
H_1^2.(aH_1^2E_1+bH_2^2E_2 +c H_1^3)&=4b+c \geq 0, \nonumber\\[2mm]
H_1H_2.(aH_1^2E_1+bH_2^2E_2+cH_1^3)&=2c \geq 0, \nonumber\\[2mm]
H_2^2.(aH_1^2E_1+bH_2^2E_2+cH_1^3)&=4a+4c \geq 0, \nonumber\\[2mm]
(H_1^2-H_1H_2+H_2^2).(aH_1^2E_1+bH_2^2E_2+cH_1^3)&= 4a+4b+3c\ge 0. \nonumber\\
\end{aligned}
\end{equation*}
Consequently, $\langle H_1^2, H_1H_2, H_2^2, H_1^2-H_1H_2+H_2^2\rangle^{\vee}=\langle H_1^2E_1,  H_2^2E_2,  H_1E_1E_2,  H_2E_1E_2\rangle$.

Since the classes $H_1^2, H_1H_2, H_2^2$ are nef, in order to finish the proof, it suffices to prove that $\beta=H_1^2-H_1H_2+H_2^2$ is also a nef class, which is Lemma \ref{BETA}.
\end{proof}

\medskip\noindent
\begin{lema}\label{BETA}
The class $\beta=H_1^2-H_1H_2+H_2^2$ is a nef class in $A_3(M)\otimes{\QQ}$.
\end{lema}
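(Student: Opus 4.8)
The plan is to prove that $\beta = H_1^2 - H_1H_2 + H_2^2$ is nef by showing that $\beta \cdot Z \ge 0$ for every irreducible surface $Z \subset M$, following the same orbit-stratification strategy used in Lemma \ref{tau}. First I would observe that $\beta$ has a useful reformulation: since $H_2 = 2H_1 - E_1$ and $H_1 = 2H_2 - E_2$ (from Lemma \ref{CHAR}), one can rewrite $\beta$ in terms of the $H_i$ and $E_i$ and try to exhibit it as an effective combination of classes whose nefness is either obvious or reducible to the lower-dimensional analysis already carried out. In particular, I expect $\beta$ to restrict to an ample (or at least nef) class on the relevant $SL_3$-invariant strata, which is what makes the transversal case immediate.

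The core of the argument is the case analysis on how $Z$ meets the $SL_3$-orbit stratification $M = U \cup (E_1^\circ \cup E_2^\circ) \cup E_1E_2$. In the generic case, when $Z$ meets all strata transversally (or after translating $Z$ by a general $g \in SL_3$ via Kleiman's Transversality Theorem \cite{KTT}), the intersection $Z \cap \{\beta\text{-representative}\}$ is transverse and $\beta \cdot Z = \beta \cdot Z^g \ge 0$ because $H_1$ and $H_2$ are nef (being pullbacks of hyperplane classes). The remaining cases are where $Z$ fails to meet some stratum properly, i.e. where $Z$ is contained in, or meets in excess dimension, one of the divisors $E_1$, $E_2$, or the closed orbit $E_1E_2$. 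By the $\ZZ/2$-symmetry induced by the Gauss map $C \mapsto C^*$ (which swaps the roles of $H_1 \leftrightarrow H_2$ and $E_1 \leftrightarrow E_2$ and fixes $\beta$), I only need to handle the cases involving $E_1$ and $E_1E_2$, halving the work.

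For the degenerate cases I would argue as follows. If $Z \subset E_1E_2$, then since $Z$ and $E_1E_2$ are both irreducible of the correct dimension, $[Z] = \lambda E_1E_2$ in $A_2$-terms for some $\lambda > 0$, reducing $\beta \cdot Z \ge 0$ to a direct computation via Lemma \ref{CHAR}. If $Z \subset E_1$ (but $Z \not\subset E_1E_2$), then I would push the computation into $E_1 \cong \PP^{2[2]}$: there is a class $\beta' \in A_2(E_1)$ with $\iota_* \beta' \sim \beta$ (or, more precisely, $\beta \cdot Z = \beta' \cdot Z$ computed inside $E_1$), and I would invoke the effective-cone description $\EFF^1(\PP^{2[2]}) = \langle B, F \rangle$ from Remark \ref{HP2} to write $[Z]$ as a nonnegative combination of the generators, then check $\beta \cdot Z \ge 0$ termwise using the identities $\iota_* B \sim E_1E_2$, $\iota_* F \sim H_1E_1$. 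The main obstacle I anticipate is the excess-intersection case where $\dim(Z \cap E_1) = 2$ with $Z \not\subset E_1$ but the naive intersection $\beta \cdot Z$ is not computed by a transverse cycle: here I must verify that the reduction to $E_1$ is legitimate, that the relevant restricted class really is nef on $E_1$, and that no positivity is lost in the excess contribution. Controlling the intersection theory on $E_1 \cong \PP^{2[2]}$ — and confirming that the candidate class restricts to a nef (not merely effective) class there, analogous to Lemma \ref{HIL2} — is the delicate step, since nefness on the Hilbert scheme is strictly stronger than effectivity and requires its own enumerative input.
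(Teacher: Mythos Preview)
Your overall strategy matches the paper's: stratify by $SL_3$-orbits, handle the generic case by transversality, and reduce the degenerate cases to effective-cone computations on the strata. But in the degenerate cases you have misidentified the relevant cones because you carried over the dimension bookkeeping from Lemma~\ref{tau} (where $Z$ is a threefold) without adjusting for the fact that here $Z$ is a \emph{surface}.

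Concretely: when $Z\subset E_1E_2$, you claim $[Z]=\lambda\,E_1E_2$ ``since $Z$ and $E_1E_2$ are both irreducible of the correct dimension''. They are not: $\dim E_1E_2=3$ while $\dim Z=2$, so $Z$ is a \emph{divisor} in the flag variety $E_1E_2\cong\mathrm{Fl}(0,1)$, and what you need is $\EFF^1(E_1E_2)=\langle H_1E_1E_2,\,H_2E_1E_2\rangle$; the paper then checks $\beta\cdot(aH_1E_1E_2+bH_2E_1E_2)=0$. Similarly, when $Z\subset E_1$ you invoke $\EFF^1(\PP^{2[2]})=\langle B,F\rangle$ from Remark~\ref{HP2}, but that is the cone of $3$-cycles on the fourfold $E_1$, whereas $Z$ is a $2$-cycle. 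The correct input is $\EFF_2(E_1)=\langle H_1^2E_1,\,H_1E_1E_2,\,H_2E_1E_2\rangle$ from Lemma~\ref{CODII}, after which one computes $\beta\cdot Z=0$ on each generator. Finally, your worry about an excess case ``$\dim(Z\cap E_1)=2$ with $Z\not\subset E_1$'' is empty: if an irreducible surface meets the closed set $E_1$ in dimension $2$, it is contained in $E_1$.

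One more point on the generic case: saying ``$H_1,H_2$ are nef'' does not by itself control the negative term $-H_1H_2$. The paper makes this work via the identity $2\beta=H_1E_1+H_2E_2$, so that for $Z$ meeting each $E_i$ in a curve one has $Z\cdot 2\beta=(Z\cap E_1)\cdot H_1|_{E_1}+(Z\cap E_2)\cdot H_2|_{E_2}\ge 0$. You gesture toward such a rewriting but never state it; without it the transversal case is not actually settled.
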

\begin{proof} Throughout this proof $Z$ denotes an irreducible surface in $M$. We wish to show that $Z.\beta\ge 0$ for any $Z$. 

The space $M$ is stratified into $SL_3\CC$-orbits and we can differentiate the irreducible surfaces in $M$ based on how they intersect these strata. We will prove this Lemma by considering all the possibilities for these intersections and showing that in each of them, we have that $Z.\beta\ge 0$. The first case is the following. Suppose $Z$ intersects transversally all the strata, thus the  intersection $Z\cap E_1$ and $Z\cap E_2$ is transversal. Moreover, suppose $\mbox{dim }Z\cap E_i=1$. This implies that $Z.2\beta \ge0$, as $Z.2\beta=Z.(H_1E_1+H_2E_2)=(Z.H_1)_{|E_1}+(Z.H_2)_{|E_2}$ and $(H_i)_{|E_i}$ are nef divisors in each $E_i$. 

The following table displays the cases to be analyzed. 
$$
\begin{tabular}{|c|cc|}
\hline
case & $\mbox{dim }Z\cap E_1$  &  $\mbox{dim }Z\cap E_2$ \\ 
\hline
I & 1 &  1  \\[.15cm]\hline
II & 2 & 2  \\ [.15cm]\hline
III & 2 & 1  \\ 
  \hline
\end{tabular}
$$
\begin{itemize}
\item[(I)]
In case $\mbox{dim }Z\cap E_1=1$ and $\mbox{dim }Z\cap E_2=1$, then there exists $g\in U\subset SL_3$ such that the translate $Z^g\cap E_i$ is an irreducible curve \cite{KTT}. At this point, we may apply the argument above and conclude that $Z^g.\beta\ge 0$, which implies that $Z.\beta\ge 0$.
\item[(II)]
If $\mbox{dim }Z\cap E_1=2$ and $\mbox{dim }Z\cap E_2=2$, then $Z$ is a divisor in $E_1E_2$. Hence, we can write $Z$ as the following nonnegative linear combination $Z=aH_1E_1E_2+bH_2E_1E_2$, for $a,b\ge 0$. Now, we compute 
\begin{equation}
\begin{aligned}
Z.\beta &= (aH_1E_1E_2+bH_2E_1E_2).(H_1^2-H_1H_2+H_2^2)=0.\nonumber\\
\end{aligned}
\end{equation}
\item[(III)] There is a symmetry in the table above, hence it suffices to consider the case, $\mbox{dim }Z\cap E_1=2$ and $\mbox{dim }Z\cap E_2=1$.

Since $\mbox{dim }Z\cap E_1=2$ and $Z$ is irreducible, then a $Z^g\subset E_1$, where $Z^g$ is a translate. Indeed, the boundary of $E_1^{\circ}$ is the unique closed $SL_3$-orbit $E_1E_2$. Hence, the closure of $Z^g\cap E$ lies in $E$.
Consequently, by Lemma \ref{CODII}, we can write $[Z^g]$ as the following nonnegative linear combination $Z^g=aH_1^2E_1+bH_1E_1E_2+cH_2E_1E_2$ for $a,b,c\ge 0$. We finish the proof by computing 

\begin{equation}
\begin{aligned}
Z^g.\beta &=(aH_1^2E_1+bH_1E_1E_2+cH_2E_1E_2).(H_1^2-H_1H_2+H_2^2)=0.\nonumber\\
\end{aligned}
\end{equation}
\end{itemize}
\end{proof}

\begin{lema}\label{CODII}	The space $A_2(E_1)\otimes \QQ$ has dimension $3$. The cone of effective $2$-cycles of $E_1$ is generated by the following classes $$\mathrm{Eff}_2(E_1)=\langle H_1^2E_1, H_1E_1E_2, H_2E_1E_2\rangle.$$
\end{lema}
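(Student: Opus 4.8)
The plan is to run the dual-cone strategy of the preceding two theorems, but intrinsically on the fourfold $E_1\cong\PP^{2[2]}$. I would first fix the relevant geometry: $g=\pi_1|_{E_1}\colon E_1\to\PP^{2*}$ realizes $E_1$ as a $\PP^2$-bundle (the fibre over a line $\ell$ being $\ell^{[2]}\cong\PP^2$), $ch=\pi_2|_{E_1}$ is the Hilbert--Chow morphism, and the closed $SL_3$-orbit $B=E_1E_2$ (the non-reduced locus) is the flag variety $\mathrm{Fl}(1,2;3)=\PP(T_{\PP^2})$. For $\dim A_2(E_1)\otimes\QQ=3$ I would reuse Section \ref{II}: the $\CC^*$-action preserves $E_1$, its fixed points are the nine points $p_0',\dots,p_5'''$, and the dimensions of the induced BB-cells of $E_1$ are obtained from that table by discarding the normal column $N_{E/M}(q)$ and keeping $T_qS+T_qF$; exactly three cells (at $p_0''$, $p_2'''$, $p_5'''$) have dimension $2$, so the claim follows from \cite{FULTON}. (Equivalently, the projective bundle formula for $g$ gives that $A^\bullet(E_1)$ is free of rank $3$ over $A^\bullet(\PP^{2*})$, with three-dimensional middle piece.)

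The inclusion $\langle H_1^2E_1,H_1E_1E_2,H_2E_1E_2\rangle\subseteq\EFF_2(E_1)$ is clear once the generators are identified geometrically: $H_1^2E_1$ is a positive multiple of the fibre class $[\ell^{[2]}]$, while $\sigma,\xi$ are the two explicit effective surfaces in $B$, and a projection-formula computation (using \ref{CHAR} and $E_1=2H_1-H_2$, $E_2=2H_2-H_1$) verifies $[\sigma]=H_1E_1E_2$ and $[\xi]=H_2E_1E_2$. For the reverse inclusion I would dualize and show that the generators of $\langle H_1^2E_1,H_1E_1E_2,H_2E_1E_2\rangle^{\vee}$ are nef. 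Writing $a=H_1E_1$ and $b=E_1E_2$, which form a basis of $A^1(E_1)$ by \ref{HP2}, the monomials $a^2,ab,b^2$ are a basis of $A_2(E_1)$, and \ref{CHAR} with the relations above yields the top intersection numbers $a^4=a^3b=ab^3=0$, $a^2b^2=16$, $b^4=-48$. Feeding these into $x\cdot H_1^2E_1\ge0$, $x\cdot\sigma\ge0$, $x\cdot\xi\ge0$ gives a nondegenerate system whose solution cone is spanned by $\delta_1=a^2$, $\delta_2=4(H_1^2-H_1H_2+H_2^2)|_{E_1}$, and $\delta_3=H_1E_1E_2-H_1^2E_1$.

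It remains to prove nefness of $\delta_1,\delta_2,\delta_3$, which I would do by the $SL_3$-orbit argument for the stratification $E_1=E_1^{\circ}\cup B$. The class $\delta_1=(g^*A)^2$ needs no case analysis: $\delta_1\cdot Z=A^2\cdot g_*Z\ge0$ for every effective surface $Z$. For $\delta_2,\delta_3$ I would split on whether an irreducible surface $Z$ lies in $B$. If $Z\subseteq B$, then $B\cong\mathrm{Fl}(1,2;3)$ has effective divisor cone $\langle\sigma,\xi\rangle$, so $[Z]=s\sigma+t\xi$ with $s,t\ge0$; the pairings $\delta_i\cdot\sigma$ and $\delta_i\cdot\xi$ computed from the matrix above are all $\ge0$, hence $\delta_i\cdot Z\ge0$. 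If $Z\not\subseteq B$, Kleiman's theorem \cite{KTT} replaces $Z$ by a general translate $Z^g$ meeting $B$ in a reduced curve $C=Z^g\cap B$ (or in nothing), and I would try to bound $\delta_i\cdot Z=\delta_i\cdot Z^g$ using $C$ together with the behaviour of $Z^g$ under the two fibrations $g$ and $ch$.

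The \emph{dominant} case $Z\not\subseteq B$ for the two quadratic generators $\delta_2$ and $\delta_3$ is where I expect the real difficulty. Neither class is a nonnegative combination of products of nef divisors on $E_1$: for example $\delta_3$ has negative $a^2$-coordinate, so the only obvious splitting $\delta_3=ab-a^2$ expresses it as a \emph{difference} of effective classes, and the naive reduction $\delta_3\cdot Z^g=a\cdot C-a^2\cdot Z^g$ is a difference of nonnegative numbers. Thus the positivity is genuinely a statement about $2$-cycles rather than divisors, and it must be extracted from the transverse intersection with the flag variety $B$ and the base-point-freeness of $a=g^*A$; this is precisely the point where the global $SL_3$-action, rather than either fibration alone, is indispensable.
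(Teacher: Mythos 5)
Your setup coincides with the paper's: the dimension count via the three $2$-dimensional BB-cells of $E_1$ (indeed the cells centered at $p_0''$, $p_2'''$, $p_5'''$), the identification $[\sigma]=H_1E_1E_2$ and $[\xi]=H_2E_1E_2$, and the reduction to showing that every class in $\langle H_1^2E_1,\sigma,\xi\rangle^{\vee}$ is nef. Your explicit dual-cone computation (with $a=H_1|_{E_1}$, $b=E_2|_{E_1}$, the top products $a^4=a^3b=ab^3=0$, $a^2b^2=16$, $b^4=-48$, and resulting generators $\delta_1=a^2$, $\delta_3=ab-a^2$, $\delta_2=3a^2+b^2=4(H_1^2-H_1H_2+H_2^2)|_{E_1}$) is correct and is more than the paper records. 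But the proof is not finished: nefness of $\delta_2$ and $\delta_3$ against an irreducible surface $Z\not\subseteq B$ \emph{is} the reverse inclusion, and ``I would try to bound $\delta_i\cdot Z^g$ using $C$'' is a plan, not an argument. Your own diagnosis of why this case is hard is accurate and worth making quantitative: $\delta_3=a(b-a)$ is not a product of nef divisors (on $B$ one has $(b-a)|_B=2(\xi'-\sigma')$, which has degree $-2$ on a fiber of $B\to\PP^2$), neither $\delta_2$ nor $\delta_3$ is supported on the closed orbit, and the half-spaces cut out by the manifestly nef classes $a^2$, $a(a+b)$, $(a+b)^2$ and $\sigma+\xi$ are strictly larger than $\langle H_1^2E_1,\sigma,\xi\rangle$ --- for instance $10ab-a^2$ satisfies all four inequalities but pairs to $-16$ with $\delta_2$. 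So the case you leave open cannot be absorbed into the easy cases; it is the entire content of the lemma.

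For comparison, the paper handles this step by a different mix: it first argues that each of the three proposed generators spans an extremal ray directly --- $H_1^2E_1$ as a fiber class of $\pi_1|_{E_1}$, and $H_1E_1E_2$, $H_2E_1E_2$ as classes contracted by the two projections $\pi_i$, citing \cite{CC} for the extremality criterion --- and then asserts that the dual cone is nef ``following the same strategy as in Lemma \ref{tau}'', with Lemma \ref{HIL2} supplying only the nef class $\sigma+\xi=\tfrac12\delta_2+\tfrac32\delta_3$, which lies in the relative interior of the face of the dual cone spanned by $\delta_2$ and $\delta_3$. Note that extremality of three rays does not imply they generate the cone, and one nef class interior to a face does not cut out that face's two bounding hyperplanes; so a complete argument still requires nef classes on the rays $\delta_2$ and $\delta_3$ themselves, or else an effective decomposition of every surface $Z\not\subseteq B$ (e.g.\ exploiting the $\PP^2$-bundle structure of $g$ to control the coefficient of $a^2$ in $[Z]$). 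As written, your proposal reproduces the paper's strategy faithfully, verifies more of the linear algebra than the paper does, but stops exactly where the paper says ``no difficulty arises'' --- and that is a genuine gap, not a routine omission.
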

\begin{proof} The divisor $E_1\subset M$ is $T$-invariant. Following the argument as in Section \ref{II}, we observe that the Bia\l{}ynicki-Birula decomposition of $E_1$, with respect to the restriction of the $T$-action on $E_1$, has three BB-cells of dimension $2$, hence the dimension of $A_2(E_1)\otimes \QQ$ is $3$.

The cycles classes $H_1^2E_1, H_1E_1E_2$ and $H_2E_1E_2$ are extremal effective cycles for the following reason. A generic point $c\in E_1$ parametrizes a double line $l$, with two marked-points. Thus, we have a morphism $\pi_1:E_1\rightarrow \PP^{2*}$ by sending $c\mapsto l$. The class $H_1^2E_1=(H_1^2)_{|E_1}$, in $A_2(E_1)$, is proportional to the class of the fiber $\pi^*(pt)$, which makes $H_1^2E_1$ an extremal effective cycle.
On the other hand, $E_1E_2$ is isomorphic to the flag variety $$\mathrm{Fl}(0,1)=\{(p,l)| p\in l\}\subset \PP^2\times \PP^{2*},$$
where the classes $H_1E_1E_2=(H_1)_{|E_1E_2}$ and $H_2E_1E_2=(H_2)_{|E_1E_2}$, in $A_2(E_1E_2)$, are proportional to the classes of the fibers of the projection morphisms into $\PP^2$ and $\PP^{2*}$. This implies that they are extremal effective cycles in $A_2(E_1E_2)$. Now, the inclusion morphism $\iota: E_1E_2\rightarrow M$ induces an injective map $\iota_*:A_2(E_1E_2)\rightarrow A_2(M)$. By definition of $M$, there are two projection maps $\pi_i:M\rightarrow \PP^5$, ($i=1,2$), and observe that the classes $H_1E_1E_2$ and $H_2E_1E_2$ are contracted by them. It follows that $H_1E_1E_2$ and $H_2E_1E_2$ are extremal cycles in $A_2(E_1)$ \cite{CC}. In order to show that these three cycles are all the extremal cycles, we compute the dual cone $\langle H_1^2E_1,H_1E_1E_2,H_2E_1E_2 \rangle^{\vee}$ and show it is generated by nef cycle classes. Observe that Lemma \ref{HIL2} carries this out partly (it shows that the wall generated by $H_1E_1E_2$ and $H_2E_1E_2$ is extremal), following the same strategy as in Lemma \ref{tau}. We omit the details as no difficulty arises. 
\end{proof}

\begin{cor}
The cone of numerically effective $3$-cycles is generated by the following classes $\mathrm{Nef}^2(M)=\langle H_1^2, H_2^2,H_1H_2, \beta\rangle$,
where $\beta = H_1^2-H_1H_2+H_2^2$.
\end{cor}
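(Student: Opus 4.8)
The plan is to read this corollary off from Theorem \ref{THM22} by passing to dual cones, so that no new geometric input is required. By definition a codimension-$2$ class on $M$ (that is, a class in $A_3(M)\otimes\QQ$) is nef precisely when it pairs non-negatively against every effective cycle of dimension $2$; hence $\mathrm{Nef}^2(M)=\mathrm{Eff}_2(M)^{\vee}$, where the dual is taken with respect to the intersection pairing between $A_3(M)\otimes\QQ$ and $A_2(M)\otimes\QQ$. Here I would invoke the remark from the preliminaries that in our setting $\overline{\mathrm{Eff}}_2(M)=\mathrm{Eff}_2(M)$, so that replacing the effective cone by its closure causes no loss.

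First I would substitute the conclusion of Theorem \ref{THM22}, namely $\mathrm{Eff}_2(M)=\langle H_1^2E_1, H_2^2E_2, H_1E_1E_2, H_2E_1E_2\rangle$, which reduces the corollary to the single identity $\langle H_1^2E_1, H_2^2E_2, H_1E_1E_2, H_2E_1E_2\rangle^{\vee}=\langle H_1^2, H_2^2, H_1H_2, \beta\rangle$ with $\beta=H_1^2-H_1H_2+H_2^2$. This dual identity is exactly the one already verified inside the proof of Theorem \ref{THM22}: working in the basis $\{H_1^2E_1, H_2^2E_2, H_1^3\}$ of $A_2(M)\otimes\QQ$ and using Lemma \ref{CHAR} together with the relations $E_1=2H_1-H_2$ and $E_2=2H_2-H_1$, one checks that the four classes $H_1^2, H_2^2, H_1H_2, \beta$ produce precisely the four displayed linear inequalities whose common solution cone is $\mathrm{Eff}_2(M)$. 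Invoking biduality for the polyhedral (hence closed) cone $\mathrm{Eff}_2(M)$ then upgrades this to the desired equality $\mathrm{Nef}^2(M)=\langle H_1^2, H_2^2, H_1H_2, \beta\rangle$.

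Since all of the substantive work lives in Theorem \ref{THM22} and in Lemma \ref{BETA}, there is no genuine obstacle here; the corollary is a bookkeeping consequence of those results. The only matters deserving care are formal. I would state explicitly which cone is dual to which under the intersection pairing, record that the generators $H_1^2, H_2^2, H_1H_2$ are nef because they are pullbacks of powers of ample classes while $\beta$ is nef by Lemma \ref{BETA} (this is what places all four listed generators in $\mathrm{Nef}^2(M)$ to begin with), and justify the use of biduality by noting that $\mathrm{Eff}_2(M)$ is finitely generated and therefore closed, so that $(\mathrm{Eff}_2(M)^{\vee})^{\vee}=\mathrm{Eff}_2(M)$.
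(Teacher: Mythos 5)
Your proposal is correct and matches the paper's reasoning: the paper states this corollary without separate proof precisely because the dual-cone identity $\langle H_1^2E_1, H_2^2E_2, H_1E_1E_2, H_2E_1E_2\rangle^{\vee}=\langle H_1^2, H_1H_2, H_2^2, \beta\rangle$ is verified inside the proof of Theorem \ref{THM22}, with nefness of $\beta$ supplied by Lemma \ref{BETA}. Your additional remarks on closedness of the finitely generated cone and biduality are harmless bookkeeping that the paper leaves implicit.
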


\medskip\noindent
The class $H_1^2E_1$ parametrizes fixed double lines with two marked-points which vary. Hence, $H_1^2E_1\cong \PP^{1[2]}$.
The class $H_2E_1E_2$ parametrizes double lines such that the unique marked-point in it lies in a fixed line. Hence, $H_2E_1E_2\cong \PP^{2*}$. 

\medskip\noindent
It is important to mention that Theorem \ref{THM1} and Theorem \ref{THM22} are consistent with the theory of spherical varieties. For example, both of the subvarieties in the previous paragraph are rational as the theory of spherical varieties predicted \cite{PERRIN}.

\medskip
\section*{Acknowledgements}
\medskip\noindent
I thank Izzet Coskun for suggesting the proof of the main result of this note. Thanks to Dawei Chen, Joe Harris and Francesco Cavazzani for useful conversations. I would also like to thank Chris Gomes for reading a draft of this note and generously improving the language of it. I am grateful to the department of mathematics at Harvard for providing me with ideal working conditions to complete this work. I thank the referee for a detailed reading of this note and useful suggestions.

\bigskip

\end{document}